\newtheorem{theorem}{Theorem}
\newtheorem{lemma}[theorem]{Lemma}
\newtheorem{corollary}[theorem]{Corollary}
\theoremstyle{definition}
\newtheorem{definition}[theorem]{Definition}
\newtheorem{remark}[theorem]{Remark}
\def\Z{\mathbb Z}
\def\R{\mathbb R}
\def\C{\mathbb C}
\def\p{\partial}
\newcommand{\into}{\ensuremath{\hookrightarrow}}
\newcommand{\Crit}{\mathop{\rm Crit}\nolimits}
\begin{document}

\title{Trisecting $4$--manifolds}
\authors{ David T. Gay, Robion Kirby
\footnote{This work was partially supported by a grant from the Simons
  Foundation (\#210381 to David Gay). The first author was partially supported by NSF grant DMS-1207721. The second author was partially
  supported by NSF grant DMS-0838703.}}  
\address{Euclid Lab, 160 Milledge Terrace, Athens, GA 30606\\ Department of Mathematics,University
  of Georgia, Athens, GA 30602} \secondaddress{University of
  California, Berkeley, CA 94720} \email{d.gay@euclidlab.org}
\secondemail{kirby@math.berkeley.edu}
\begin{abstract}

We show that any smooth, closed, oriented, connected $4$--manifold
can be trisected into three copies of $\natural^k (S^1 \times B^3)$, intersecting pairwise in $3$--dimensional handlebodies, with triple intersection a closed $2$--dimensional surface.  Such a trisection is unique up to a natural stabilization operation.  This is analogous to the
existence, and uniqueness up to stabilization, of Heegaard splittings of $3$--manifolds. A trisection of a $4$--manifold $X$ arises from a Morse $2$--function $G:X \to B^2$ and the obvious trisection of $B^2$, in much the same way that a Heegaard splitting of a $3$--manifold $Y$ arises from a Morse function $g : Y \to B^1$ and the obvious bisection of $B^1$.

\end{abstract}
\primaryclass{57M50}
\secondaryclass{57R45, 57R65}
\keywords{Morse 2-function, 4-manifold, Heegaard splitting, Heegaard triple}

\maketitle

\section{Introduction}

Consider first the $3$--dimensional case of an oriented, connected,
closed $3$--manifold $Y^3$.  From a Morse function
$f:Y \to [0,3]$ with only one critical point of index zero and one of
index three, and all critical points of index $i$ mapping to $i$, we
see that $f^{-1} ([0,3/2])$ and $f^{-1}([3/2,3])$ are solid handlebodies,
$\natural^g (S^1 \times B^2)$ .

For uniqueness, we use Cerf theory \cite{cerf} to get a homotopy
$f_t:Y \to [0,3]$ between $f_0$ and $f_1$ (each giving Heegaard
splittings) where this homotopy introduces no new critical points of
index zero or three.  There are births and deaths of
canceling pairs of index one and two critical points, but these stabilize the Heegaard
splittings by connected summing with the standard genus one splitting
of $S^3$.  The homotopy $f_t$ can be chosen to keep the index one
critical values below $3/2$ and the index two above.  Then handle
slides between $1$--handles, or $2$--handles, take one Heegaard
splitting to the other.  (This is a now well known Cerf theoretic
proof of the Reidemeister-Singer theorem \cite{reide}\cite{singer}
which was originally proved combinatorially e.g. \cite{saveliev}.)

Recall that a Heegaard {\em diagram} for a Heegaard splitting is a triple $(F_g, \alpha, \beta)$ where $F_g$ is the Heegaard surface, and each of $\alpha$ and $\beta$ is a $g$--tuple of simple closed curves in $F_g$ which bounds a basis of compressing disks in each of the two handlebodies. Thus every $3$--manifold is described by a Heegaard diagram, and two Heegaard diagrams describe diffeomorphic $3$--manifolds if and only if they are related by stabilization, handle slides, and diffeomorphisms of $F_g$.

We now set up an analogous story in dimension four: Let $Z_k = \natural^k (S^1 \times B^3)$
with $Y_k = \partial Z_k = \sharp^k (S^1 \times S^2)$. Given an
integer $g \geq k$, let $Y_k = Y_{k,g}^+ \cup Y_{k,g}^-$ be the
standard genus $g$ Heegaard splitting of $Y_k$ obtained by stabilizing
the standard genus $k$ Heegaard splitting $g-k$ times. 
\begin{definition} \label{D:Trisection}
 Given integers $0 \leq k \leq g$, a {\em $(g,k)$--trisection} (see Figure~\ref{F:BasicTrisection}) of a closed, connected, oriented $4$--manifold $X$ is a decomposition of $X$ into three submanifolds $X = X_1 \cup X_2 \cup X_3$ satisfying the following properties:
\begin{enumerate}
 \item For each $i = 1,2,3$, there is a diffeomorphism $\phi_i \co X_i \to Z_k$.
 \item For each $i = 1,2,3$, taking indices mod $3$, $\phi_i(X_i \cap
   X_{i+1}) = Y_{k,g}^-$ and $\phi_i(X_i \cap X_{i-1}) = Y_{k,g}^+$.
\end{enumerate}
\end{definition}

\begin{remark}
 Note that the triple intersection $X_1 \cap X_2 \cap X_3$ is a surface of genus $g$ and that $\chi(X)=2+g-3k$. Thus $k$ is determined by $X$ and $g$, and for this reason we will often refer to a $(g,k)$--trisection of $X$ simply as a {\em genus $g$ trisection} of $X$. Also note that, for a fixed $X$, different trisections thus have the same genera mod $3$.
\end{remark}

Given a $(g,k)$--trisection $X=X_1 \cup X_2 \cup X_3$, consider the handlebodies $H_{ij} = X_i \cap X_j$ and the central genus $g$ surface $F_g = X_1 \cap X_2 \cap X_3 = \partial H_{ij}$. A choice of a system of $g$ compressing disks on $F_g$ for each of the three handlebodies gives three collections of $g$ curves: $\alpha = (\alpha_1, \ldots, \alpha_g)$,  $\beta = (\beta_1, \ldots, \beta_g)$ and $\gamma = (\gamma_1, \ldots, \gamma_g)$, such that compressing along $\alpha$ gives $H_{12}$, compressing along $\beta$ gives $H_{23}$ and compressing along $\gamma$ gives $H_{31}$. Furthermore, each pair $(\alpha,\beta)$, $(\beta,\gamma)$ and $(\gamma,\alpha)$ is a Heegaard diagram for $\sharp^k (S^1 \times S^2)$.

\begin{definition} \label{D:TrisectionDiagram}
 A {\em $(g,k)$--trisection diagram} is a $4$--tuple $(F_g, \alpha, \beta, \gamma)$ such that each triple $(F_g,\alpha,\beta)$, $(F_g,\beta,\gamma)$ and $(F_g,\gamma,\alpha)$ is a genus $g$ Heegaard diagram for $\sharp^k (S^1 \times S^2)$. The $4$--manifold determined in the obvious way by this trisection diagram will be denoted $X(F_g, \alpha, \beta, \gamma)$.
\end{definition}

\begin{theorem}[Existence] \label{T:Existence}
 Every closed, connected, oriented $4$--manifold $X$ has a $(g,k)$--trisection for some $0 \leq k \leq g$. Furthermore, $g$ and $k$ are such that $X$ has a handlebody decomposition with $1$ $0$--handle, $k$ $1$--handles, $g-k$ $2$--handles, $k$ $3$--handles and $1$ $4$--handle.
\end{theorem}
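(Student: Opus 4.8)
The plan is to read the trisection off from a handle decomposition via the associated Morse $2$--function $G\colon X\to B^2$, exactly as the abstract advertises, and to produce the balanced handle decomposition of the ``furthermore'' clause as part of the reduction. First I would take a self--indexing Morse function on $X$ with a single index--$0$ and a single index--$4$ critical point, giving a decomposition with $1$ $0$--handle, $k_1$ $1$--handles, some $2$--handles, $k_3$ $3$--handles and $1$ $4$--handle. By inserting canceling $1/2$-- or $2/3$--handle pairs I can raise the smaller of $k_1$ and $k_3$ until they agree, say $k_1=k_3=k$; writing $g=k+(\#\,2\text{--handles})$ then yields a decomposition of the desired shape $(1,k,g-k,k,1)$ with $g\ge k$. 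This establishes the ``furthermore'' clause, so the real content is to convert such a balanced decomposition into an honest $(g,k)$--trisection using $G$ and the obvious trisection of $B^2$ into three sectors $S_1,S_2,S_3$ meeting at the center.

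Second, I would build the three pieces as $X_i=G^{-1}(S_i)$, and the key point to keep in mind is that these are \emph{not} the index groupings of the handle decomposition. Indeed $\partial X_i=H_{i-1,i}\cup_{F_g}H_{i,i+1}$ is connected, whereas the $2$--handle cobordism separating the $1$-- from the $3$--handles has two boundary components, so the $2$--handles cannot simply be the middle piece; the Morse $2$--function instead redistributes them around the three sectors. The target is to arrange $G$ as an indefinite Morse $2$--function in standard radial form: over a small central disk $G$ restricts to the trivial genus--$g$ surface bundle with fibre $F_g$, and outside it the fold locus is a controlled nested family of circles, organized so that (i) along each of the three bounding radii the fibre $F_g$ is compressed to exhibit $H_{i-1,i}$ and $H_{i,i+1}$ as genus--$g$ handlebodies, and (ii) the restriction of $G$ to each sector $S_i$ presents $Z_k$ with its standard structure of one $0$--handle and $k$ $1$--handles. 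Because each pair $(H_{i-1,i},H_{i,i+1})$ is then a genus--$g$ Heegaard splitting of $\partial Z_k=\sharp^k(S^1\times S^2)$, this is precisely a $(g,k)$--trisection in the sense of Definition~\ref{D:Trisection}, and the $g$--tuples of compressing curves read off along the three radii give the associated diagram $(F_g,\alpha,\beta,\gamma)$ of Definition~\ref{D:TrisectionDiagram}.

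The hard part will be producing this standard radial form, equivalently verifying that each sector piece is diffeomorphic to $Z_k$; this is the step where the $2$--handles, together with the belt spheres of the dual $3$--handles, must be controlled. I would attack it by positioning all of this attaching data compatibly with a single surface: isotope the framed attaching link of the $2$--handles in $\partial(h^0\cup k\,h^1)=\sharp^k(S^1\times S^2)$, and the dual $3$--handle data, into Heegaard--compatible position with respect to the standard genus--$g$ splitting $Y_{k,g}^+\cup Y_{k,g}^-$, stabilizing the splitting (i.e.\ increasing $g$) as much as necessary so that every component can be pushed onto the surface $F_g$ with its prescribed framing. Once the entire attaching picture lives on $F_g$, the fold locus of $G$ can be put in radial standard form and each $X_i$ is manifestly $Z_k$. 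Any stabilization used here only enlarges $g$, producing another balanced decomposition of the same shape, so the output genus still satisfies the ``furthermore'' clause; the technical heart is checking that this positioning can always be achieved and that the resulting folds assemble into the required standard form, for which the Morse $2$--function machinery (indefiniteness and fiber--connectedness after homotopy and stabilization) is the essential tool.
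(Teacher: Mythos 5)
Your overall strategy mirrors the paper's (the paper gives two proofs, and yours is essentially a hybrid of them: a balanced handle decomposition, the $2$--handle attaching link pushed onto a stabilized Heegaard surface of $\partial(h^0\cup k\,h^1)$ dual to meridians, all packaged in Morse $2$--function language). But there is a genuine gap at exactly the step you treat as harmless. You claim that stabilizing the Heegaard splitting ``only enlarges $g$, producing another balanced decomposition of the same shape.'' This is false: a Heegaard stabilization changes the splitting of $\partial X_1$ but does not change the handle decomposition of $X$, so it creates no new $2$--handles. The balance you need is rigid: since $\chi(X)=2+g-3k$ for a $(g,k)$--trisection, the number of $2$--handles must equal exactly $g-k$, where $g$ is the genus of the Heegaard surface you actually end up using. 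Concretely, the middle piece is $X_2=([0,\epsilon]\times H_{12})\cup(2\text{--handles})$, and $[0,\epsilon]\times H_{12}\cong\natural^{g}(S^1\times B^3)$; the $m$ $2$--handles, being dual to meridians, cancel $m$ of these $1$--handles, so $X_2\cong\natural^{g-m}(S^1\times B^3)$. If you stabilize more times than you have $2$--handles, then $g-m>k$, the three pieces are not copies of the same $Z_k$, and you do not get a trisection at all (nor does the ``furthermore'' clause hold for the pair $(g,k)$ you produce).

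The paper closes exactly this gap with a count your proposal is missing. Project $L$ to the genus $k_1$ surface with $c$ crossings, each component crossing at least once. If $c\le k_2$ (the number of $2$--handles), stabilize exactly $k_2$ times --- not merely $c$ times --- with $c$ stabilizations at the crossings and the remaining $k_2-c$ anywhere, so that the final genus is $g=k_1+k_2$ and the balance $m=g-k$ holds. If $c>k_2$, stabilization alone cannot work; one must first add $c-k_2$ cancelling $1$--$2$ pairs \emph{and} $c-k_2$ cancelling $2$--$3$ pairs to the handle decomposition (the $2$--handles of the $1$--$2$ pairs arrive already dual to the new meridians; those of the $2$--$3$ pairs are $0$--framed unknots, each requiring one further stabilization), and the arithmetic $k'_1=k_1+c-k_2$, $k'_2=2c-k_2$, $g'=k'_1+k'_2$ closes the deficit exactly. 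Your argument needs this device, or its Morse $2$--function counterpart in the paper's first proof --- the ``introduction of an eye,'' which adds a cuspless fold to one chosen sector precisely at the cost of adding a cusped fold to each of the other two --- before the three sectors can be identified with the same $\natural^k(S^1\times B^3)$. A minor further point: no positioning of the $3$--handle data is needed at all; once $X_1$ and $X_2$ are as above, $X_3$ is the closure of the complement, which is the union of the $3$-- and $4$--handles with an external collar attached, hence automatically diffeomorphic to $\natural^k(S^1\times B^3)$.
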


\begin{figure}
\labellist
\tiny\hair 2pt
\pinlabel $X_1$ at 35 85
\pinlabel $X_2$ at 65 33
\pinlabel $X_3$ at 95 85

\endlabellist
\centering
 \includegraphics[width=1in]{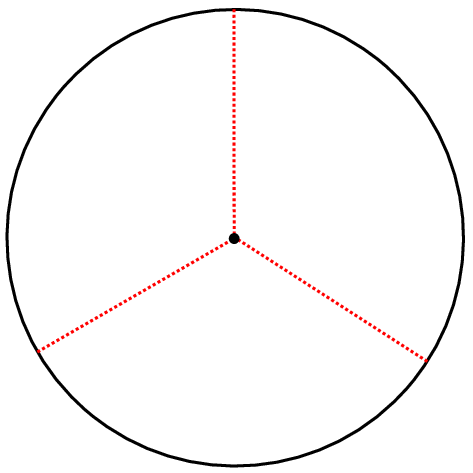}
 \caption{\label{F:BasicTrisection} How the pieces of a trisection fit together.}
\end{figure}

\begin{remark}
 There are two trivial consequences of the handle decomposition mentioned in the theorem which are worth noting:
\begin{enumerate}
 \item If $k=0$, i.e. $X_1$, $X_2$ and $X_3$ are each $4$--balls, then $X$ has no $1$-- or $3$--handles, and is thus simply-connected.
 \item If $g=k$, then $X$ has no $2$--handles, so $X \cong \sharp^k S^1 \times S^3$. 
\end{enumerate}
\end{remark}

The following is immediate:
\begin{corollary}
 Every closed $4$--manifold is diffeomorphic to $X(F_g,\alpha,\beta,\gamma)$ for some trisection diagram $(F_g,\alpha,\beta,\gamma)$.
\end{corollary}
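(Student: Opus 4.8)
The plan is to read the corollary off directly from the Existence Theorem together with the reconstruction of $X$ from its trisection data. First I would invoke Theorem~\ref{T:Existence} to obtain a $(g,k)$--trisection $X = X_1 \cup X_2 \cup X_3$ of the given (closed, connected, oriented) $4$--manifold. As explained in the paragraph preceding Definition~\ref{D:TrisectionDiagram}, I would then fix a system of $g$ compressing disks on the central surface $F_g = X_1 \cap X_2 \cap X_3$ for each of the three handlebodies $H_{12}$, $H_{23}$, $H_{31}$; their boundaries give three $g$--tuples of simple closed curves $\alpha$, $\beta$, $\gamma$ on $F_g$. Since $\partial X_i = H_{i,i+1} \cup_{F_g} H_{i-1,i}$ is the boundary $\sharp^k (S^1 \times S^2)$ of $Z_k$, each of the three pairs $(\alpha,\beta)$, $(\beta,\gamma)$, $(\gamma,\alpha)$ is a genus $g$ Heegaard diagram for $\sharp^k (S^1 \times S^2)$, so $(F_g,\alpha,\beta,\gamma)$ is a trisection diagram in the sense of Definition~\ref{D:TrisectionDiagram}.

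It then remains to check that the $4$--manifold $X(F_g,\alpha,\beta,\gamma)$ built from this diagram ``in the obvious way'' is diffeomorphic to $X$ itself. I would argue this by running the reconstruction in reverse: from $F_g$ and the curve system $\alpha$ one recovers the handlebody $H_{12}$ (thicken $F_g$, attach $3$--dimensional $2$--handles along $\alpha$, and cap the resulting boundary spheres with $3$--balls), and likewise for $H_{23}$ and $H_{31}$. Gluing these three handlebodies along their common boundary $F_g$ reproduces exactly the spine $H_{12} \cup H_{23} \cup H_{31}$ of the original trisection, and the three pairwise unions are the boundaries $\partial X_1$, $\partial X_2$, $\partial X_3$, each a copy of $\sharp^k (S^1 \times S^2)$.

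The one point that requires care — and the step I expect to be the genuine content rather than bookkeeping — is that the four-dimensional pieces are determined by this three-dimensional data: each $X_i \cong Z_k = \natural^k (S^1 \times B^3)$ must be the \emph{unique} such filling of its boundary $\sharp^k(S^1 \times S^2)$. This is precisely what the theorem of Laudenbach and Po\'enaru supplies, namely that every self-diffeomorphism of $\sharp^k (S^1 \times S^2)$ extends over $\natural^k (S^1 \times B^3)$; it guarantees that capping off each boundary component with a copy of $Z_k$ is canonical up to diffeomorphism, so the glued-up $4$--manifold depends on nothing beyond the diagram. Granting this, the reconstruction recovers the original trisection of $X$ up to diffeomorphism, whence $X \cong X(F_g,\alpha,\beta,\gamma)$, which is the assertion of the corollary.
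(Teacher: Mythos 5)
Your proposal is correct and takes essentially the same route as the paper, which treats the corollary as immediate from Theorem~\ref{T:Existence} together with the preceding discussion of how a trisection plus a choice of compressing disk systems yields a trisection diagram. The one point you rightly flag as having genuine content --- that the diagram determines the $4$--manifold because the $Z_k$ fillings of $\sharp^k(S^1 \times S^2)$ glue in canonically --- is exactly the point the paper handles by citing Laudenbach--Po\'enaru for the uniqueness of attaching the $3$-- and $4$--handles.
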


\begin{remark}
 Readers familiar with the Heegaard triples used~\cite{OzsSzTriangles} to define the Heegaard-Floer $4$--manifold invariants will see that a trisection diagram is a special type of Heegaard triple and may suspect that this corollary follows fairly quickly from the Heegaard triple techniques in~\cite{OzsSzTriangles}. In all fairness this is probably true; we will present two proofs of Theorem~\ref{T:Existence}, one of which tells the story of how we discovered the result using Morse $2$--functions, while the other is more in the spirit of~\cite{OzsSzTriangles}, directly using ordinary handle decompositions. In some sense, then, our existence result can be thought of as a particularly nice packaging of the topological setup for~\cite{OzsSzTriangles}.
\end{remark}

Exactly as with Heegaard splittings in dimension $3$, our uniqueness result for trisections of $4$--manifolds is uniqueness up to a stabilization operation, which we now define. The idea is illustrated in Figure~\ref{F:3DTrisectionStabilized}, in dimension $3$.
\begin{figure}
\labellist
\tiny\hair 2pt
\pinlabel $X_1$ at 82 227
\pinlabel $X_2$ at 223 90
\pinlabel $X_3$ at 251 183
\pinlabel $H_{31}$ at 210 223
\pinlabel $H_{12}$ at 78 161
\pinlabel $H_{23}$ at 243 125
\endlabellist
\centering
 \includegraphics[width=2.5in]{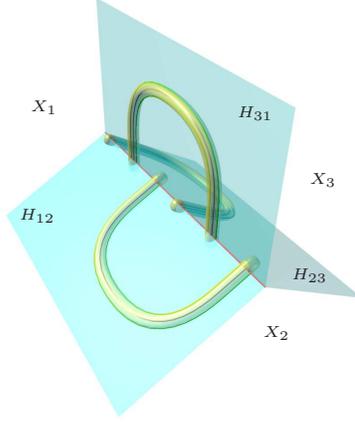}
 \caption{\label{F:3DTrisectionStabilized} Stabilizing a trisection in dimension $3$.}
\end{figure}

\begin{definition}[Stabilization] \label{D:Stabilization}
 Given a $4$--manifold $X$ with a trisection $(X_1,X_2,X_3)$, we construct a new trisection $(X'_1,X'_2,X'_3)$, as follows: For each $i,j \in \{1,2,3\}$, let $H_{ij}$ be the handlebody $X_i \cap X_j$, with boundary $F=X_1 \cap X_2 \cap X_3$. Let $a_{ij}$ be a properly embedded boundary parallel arc in each $H_{ij}$, such that the end points of $a_{12}$, $a_{23}$ and $a_{31}$ are disjoint in $F$. Let $N_{ij}$ be a closed $4$--dimensional regular neighborhood of $a_{ij}$ in $X$ (thus diffeomorphic to $B^4$), with $N_{12}$, $N_{23}$ and $N_{31}$ disjoint. Then we define:
\begin{itemize}
 \item $X'_1 = (X_1 \cup N_{23}) \setminus (\mathring{N}_{31} \cup \mathring{N}_{12})$
 \item $X'_2 = (X_2 \cup N_{31}) \setminus (\mathring{N}_{12} \cup \mathring{N}_{23})$
 \item $X'_3 = (X_3 \cup N_{12}) \setminus (\mathring{N}_{23} \cup \mathring{N}_{31})$
\end{itemize}
The operation of replacing $(X_1,X_2,X_3)$ with $(X'_1,X'_2,X'_3)$ is called {\em stabilization}.
\end{definition}

Since any two boundary parallel arcs in a handlebody are isotopic, it is clear that this operation does not depend on the choice of arcs or neighborhoods. 

In terms of trisection diagrams we have: 
\begin{definition}
 Given a trisection diagram $(F_g,\alpha,\beta,\gamma)$, the trisection diagram $(F'_{g'}=F_{g+3},\alpha',\beta',\gamma')$ obtained by connected summing $(F_g,\alpha,\beta,\gamma)$ with the diagram in Figure~\ref{F:S4nontrivial} is called the {\em stabilization} of $(F_g,\alpha,\beta,\gamma)$.
\end{definition}
\begin{figure}
\labellist
\tiny\hair 2pt
\endlabellist
\centering
 \includegraphics{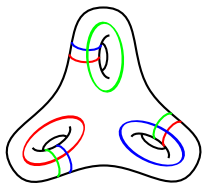}
 \caption{\label{F:S4nontrivial} Stabilizing a trisection diagram means connected summing with this diagram. By itself, this describes the simplest nontrivial trisection of $S^4$, of genus $3$. The red curves are the $\alpha$ curves, the blues are the $\beta$'s and the greens are the $\gamma$'s.}
\end{figure}

We prove the following fact at the beginning of Section~\ref{S:Uniqueness}:

\begin{lemma} \label{L:Stabilization}
 If $(X_1,X_2,X_3)$ is a trisection of $X^4$, with genus $g$ and diagram $(F_g,\alpha,\beta,\gamma')$, and $(X_1',X_2',X_3')$ is a stabilization of $(X_1,X_2,X_3)$, then $(X_1',X_2',X_3')$ is also a trisection of $X$, with genus $g'=g+3$ and diagram $(F_{g'},\alpha',\beta',\gamma')$, the stabilization of $(F_g,\alpha,\beta,\gamma)$.
\end{lemma}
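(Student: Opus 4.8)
The plan is to recognize stabilization as a purely local operation and to identify its effect inside a small ball with the trisected connected sum with the standard genus~$3$ trisection of $S^4$ from Figure~\ref{F:S4nontrivial}. Since $\chi(S^4)=2$ forces $k=1$ when $g=3$, that trisection is a $(3,1)$--trisection, so each of its three pieces is a single copy of $S^1 \times B^3$ and each of its walls is a genus~$3$ handlebody carrying the standard genus~$3$ Heegaard splitting of $S^1 \times S^2$. Granting the identification, everything in the lemma follows formally: connected sum with $S^4$ does not change $X$, so $(X_1',X_2',X_3')$ trisects the same $4$--manifold; each piece becomes $X_i' = X_i \natural (S^1 \times B^3) = \natural^{k+1}(S^1 \times B^3)$, verifying clause~(1) of Definition~\ref{D:Trisection} with $k' = k+1$ (consistent with $\chi(X) = 2 + g - 3k$ being unchanged as $g \mapsto g+3$); the walls connect-sum to genus~$(g+3)$ handlebodies realizing the standard splittings of $\sharp^{k+1}(S^1 \times S^2)$, giving clause~(2); and the triple intersection becomes $F_g \# F_3 = F_{g+3}$. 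Reading the diagram of a connected sum as the union of the two curve systems on the connect-summed surface then exhibits $(F_{g'},\alpha',\beta',\gamma')$ as exactly the stabilization of $(F_g,\alpha,\beta,\gamma)$.

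The first step is locality. Using the observation that the construction is independent of the choice of boundary parallel arcs (noted right after Definition~\ref{D:Stabilization}), I would isotope each $a_{ij}$ into an arbitrarily small collar of $F$ and thereby arrange that all three arcs $a_{12}, a_{23}, a_{31}$ and their $B^4$ neighborhoods $N_{12}, N_{23}, N_{31}$ lie inside a single ball $B \cong B^4$ that meets the trisection in the trivial genus~$0$ trisection of $B^4$ (three sectors meeting along three walls, with $F \cap B$ a disk). The operation changes nothing outside $B$, so the whole modification is confined to $B$, and the local picture inside $B$ is the same for every $X$. Hence it suffices to perform the computation once, for the trivial genus~$0$ trisection of $S^4$, and to check that stabilizing it produces the genus~$3$ trisection of $S^4$ with diagram Figure~\ref{F:S4nontrivial}.

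For that computation I would take explicit coordinates, say $B^4 = B^2 \times B^2$ with the first factor cut into three sectors so that $X_i \cap B = (\mathrm{sector}_i) \times B^2$, $H_{ij} \cap B = (\mathrm{ray}_{ij}) \times B^2$ and $F \cap B = \{0\} \times B^2$, then place the three arcs and their neighborhoods in this model and carry out the add-one/remove-two prescription of Definition~\ref{D:Stabilization} explicitly. I would track how removing $\mathring N_{ij}$ from $X_i$ and $X_j$ while handing $N_{ij}$ to $X_k$ reshapes the three walls and attaches three handles to $F$ inside $B$, confirming $F' = F_{g+3}$ and that the new walls are again handlebodies, and finally match the three resulting systems of compressing curves to the red, blue and green curves of Figure~\ref{F:S4nontrivial}. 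This bookkeeping --- following the simultaneous reshaping of all three walls and the central surface under the three ball moves, and pinning down the new $\alpha'$, $\beta'$, $\gamma'$ on the nose --- is the part I expect to be the main obstacle; everything else is either formal or a consequence of the independence-of-arcs remark already granted in the paper.
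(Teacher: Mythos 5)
Your overall strategy --- isotope the three arcs into a single small ball $B$ meeting the trisection in the trivial genus-$0$ trisection of $B^4$, so that stabilization becomes a purely local operation, and then identify its local effect with connected sum with the standard genus-$3$ trisection of $S^4$ --- is sound and genuinely different from the paper's. The paper instead realizes the given trisection by a Morse $2$--function whose fold image has the form of Figure~\ref{F:Folds4}, adds three ``eyes'' as in Figure~\ref{F:StabilizedMorse2F}, and checks, one eye at a time (Figure~\ref{F:TrisectedEye}), that pulling a fold of the eye across the central fiber enlarges one sector by exactly a regular neighborhood of a boundary-parallel arc in one of the $H_{ij}$'s, i.e.\ implements one of the three ball moves of Definition~\ref{D:Stabilization}; that the result is still a trisection is then automatic, because the new fold picture again has the form of Figure~\ref{F:Folds4}. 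In your approach the corresponding fact is not automatic, and that is where the gap lies.

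The gap is concrete: everything in your first paragraph is conditioned on ``granting the identification,'' and the identification is never established. What must actually be proved is that applying the three moves of Definition~\ref{D:Stabilization} to the trivially trisected ball yields a trisected ball --- that each new piece is diffeomorphic to $S^1 \times B^3$, that each new wall is a genus-$3$ handlebody whose pairs form the \emph{standard} splittings demanded by Definition~\ref{D:Trisection}, and that the resulting curve systems are those of Figure~\ref{F:S4nontrivial}. This is precisely the content of the lemma in the model case, and you explicitly defer it as ``bookkeeping'' and ``the main obstacle.'' Note also a circularity to avoid: you cannot ground the model case by citing the paper's Section~\ref{S:Examples} description of the standard genus-$3$ trisection of $S^4$ (your use of $\chi(S^4)=2$ to get $k=1$ already presumes that object is known to be a trisection), because in the paper that object is \emph{defined} as the stabilization of the genus-$0$ trisection --- its trisection-hood is an instance of this very lemma. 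The model computation is doable along the lines you indicate: drilling the boundary-parallel arc out of the $3$--ball wall gives a solid torus, the two frontier plates $\partial N_{jk} \cap X_i$ attach as $3$--dimensional $1$--handles so each new wall is a genus-$3$ handlebody, each new piece is a $4$--ball with a single $1$--handle attached (minus boundary collars), and Waldhausen's theorem upgrades the resulting Heegaard splittings of $\sharp^{k+1}(S^1 \times S^2)$ to standard ones. But until this is written down, the lemma has been reduced, not proved.
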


The reader may find Figure~\ref{F:3DTrisectionStabilized} useful in proving this lemma before reading our proof.

\begin{theorem}[Uniqueness] \label{T:Uniqueness}
Given two trisections of $X$, $(X_1, X_2, X_3)$ and $(X_1', X_2',
X_3')$, then after stabilizing each trisection some number of times, there is a
diffeomorphism $h:X \to X$ isotopic to the identity with the property that $h(X_i) = X_i'$ for each $i$. In particular, $h(X_i \cap X_j)= X_i' \cap X_j',$
for $i \neq j$ in $\{1,2,3\}$, and $h(X_1 \cap X_2 \cap X_3) = h(X'_1 \cap X'_2 \cap X'_3)$.
\end{theorem}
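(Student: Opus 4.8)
The plan is to run, one dimension up, the Cerf--theoretic argument for the Reidemeister--Singer theorem sketched in the introduction: replace Morse functions $Y \to B^1$ and $1$--parameter Cerf theory by Morse $2$--functions $X \to B^2$ and the $1$--parameter theory of generic homotopies between them. So the first order of business is to set up a dictionary between trisections and Morse $2$--functions. Fix the standard trisection of $B^2$ into sectors $P_1, P_2, P_3$ by three radial arcs $e_1, e_2, e_3$ from the center $c$ to $\partial B^2$. I claim that every trisection $(X_1, X_2, X_3)$ is induced by an indefinite, surjective Morse $2$--function $G \co X \to B^2$ in \emph{standard position} with respect to this decomposition, meaning $X_i = G^{-1}(P_i)$, the central fiber $G^{-1}(c)$ is the surface $F_g$, each $G^{-1}(e_i)$ is one of the handlebodies $H_{ij}$, and each $G^{-1}(P_i) \cong Z_k$. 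Such a $G$ can be assembled from the local fold models already produced in the handle--theoretic proof of Theorem~\ref{T:Existence}, and reading the conditions of Definition~\ref{D:Trisection} off of a standard $G$ is immediate. Using Lemma~\ref{L:Stabilization}, which I may assume, I would also check that stabilization corresponds to a single explicit local modification of $G$ supported near one edge $e_j$ --- the one inserting the genus--$3$ $S^4$ piece of Figure~\ref{F:S4nontrivial}.

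With the dictionary in place, let $G_0, G_1 \co X \to B^2$ be standard--position Morse $2$--functions inducing the two given trisections. I would then invoke the uniqueness theory for indefinite Morse $2$--functions to connect $G_0$ to $G_1$ by a generic homotopy $G_t \co X \to B^2$, $t \in [0,1]$, keeping the behavior over $\partial B^2$ under control. Along a generic such path the fold data changes only by a known finite list of codimension--one events: births and deaths of cusped fold circles, crossings of fold images in the base, swallowtails, and ambient isotopies of the fold locus within $B^2$. The core of the argument is a move--by--move analysis. For each elementary event I must show that the decomposition $G_t^{-1}(P_1), G_t^{-1}(P_2), G_t^{-1}(P_3)$ just before and just after the event differs either by an ambient isotopy of $X$ (when the event takes place inside a single sector $P_i$, or merely slides folds without crossing any edge $e_j$ or the center $c$) or by a single stabilization (when the event pushes a fold across an edge $e_j$ or alters the central genus). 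The stabilization case is where the local picture of the previous paragraph is used.

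The main obstacle is exactly the analogue of the step in dimension three where one forces the index--one critical values below $3/2$ and the index--two values above: a generic homotopy $G_t$ will drag fold arcs across the edges $e_j$ and across the center $c$, and at those moments $G_t$ fails to be in standard position, so $(G_t^{-1}(P_1), G_t^{-1}(P_2), G_t^{-1}(P_3))$ is temporarily not a trisection at all. I expect the heart of the proof to be a normalization lemma asserting that, after stabilizing both trisections enough times to create the genus needed as ``room,'' the path $G_t$ can be replaced by one for which every crossing of an edge $e_j$ is realized as an honest stabilization (never a destabilization, which we avoid by pre--stabilizing), while on each complementary subinterval standard position is preserved up to isotopy. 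This is precisely where the freedom to stabilize in Theorem~\ref{T:Uniqueness} is indispensable, and where I expect essentially all of the technical difficulty to lie; the remaining moves should each be dispatched by an explicit local model.

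Finally, to produce $h$ itself: after the normalization above, the homotopy yields a finite chain of trisections in which consecutive terms are related either by an ambient isotopy of $X$ or by the explicit stabilization move. Absorbing the stabilizations into the statement --- stabilize the first trisection and the second trisection the appropriate numbers of times to reach a common stabilized trisection up to isotopy --- the intervening ambient isotopies compose, by the parametrized isotopy extension theorem, into a single ambient isotopy $\Phi_t \co X \to X$ with $\Phi_0 = \id$ carrying the (stabilized) first trisection to the (stabilized) second. Setting $h = \Phi_1$ gives the desired diffeomorphism; it is isotopic to the identity precisely because it is the time--one map of $\Phi_t$, and it satisfies $h(X_i) = X_i'$ for each $i$, whence also $h(X_i \cap X_j) = X_i' \cap X_j'$ and $h(X_1 \cap X_2 \cap X_3) = X_1' \cap X_2' \cap X_3'$.
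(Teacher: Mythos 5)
Your outline is built around what you call a ``normalization lemma,'' and you explicitly leave it as an expectation rather than prove it; but that lemma is not a technical detail --- it \emph{is} the theorem. Once one grants that the generic homotopy $G_t$ can be rearranged, after pre-stabilization, so that every interaction of the fold locus with the edges $e_j$ and the center $c$ is either an ambient isotopy or an honest stabilization, the rest of your argument (composing isotopies via isotopy extension, absorbing stabilizations) is routine. You give no argument for that step, and the singularity-theoretic moves you list (births/deaths, swallowtails, crossings) do not supply one: those are \emph{local} models for the Cerf $2$--graphic in $[0,1] \times B^2$, whereas what you need is a \emph{global} position statement for that surface of folds relative to the product trisection of the base. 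At a moment when a fold arc crosses an edge or the center, the sector preimages $G_t^{-1}(P_i)$ need not be $\natural^k (S^1 \times B^3)$'s and the edge preimages need not be handlebodies at all, and there is no a priori reason the passage is a stabilization rather than a destabilization or a move that leaves the world of trisections entirely; ruling this out is precisely the unsolved combinatorial problem. (A secondary error: a stabilization is not a single modification supported near one edge $e_j$; per Lemma~\ref{L:Stabilization} it is three ``eyes,'' one per pair of sectors as in Figure~\ref{F:StabilizedMorse2F}, equivalently connected sum with Figure~\ref{F:S4nontrivial} at the center. A single eye unbalances the three $k_i$'s and does not produce a trisection in the sense of Definition~\ref{D:Trisection}.)

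The paper itself flags exactly this difficulty: in the remark immediately following its proof of Theorem~\ref{T:Uniqueness}, the authors say that ``morally'' there should be a proof along the lines you propose, but that the surface of folds in $[0,1] \times \R^2$ ``is, however, not trivial to work with,'' and they suggest Birman--Menasco braid foliations as a potential model --- i.e., they considered your route and left it open. Their actual proof avoids the two-dimensional base altogether: Lemma~\ref{L:Trisection2Handles} converts each trisection into a handle decomposition of $X$ together with a Heegaard splitting of $\partial X_1$ adapted to the attaching link $L$ of the $2$--handles; ordinary one-parameter Cerf theory then relates the two handle decompositions by cancelling $1$--$2$ and $2$--$3$ pairs, same-index handle slides, and isotopies; and each of these moves is realized by trisection stabilization plus isotopy, the subtlest case (identical handle decompositions but two different Heegaard splittings of $\partial X_1$ containing $L$) being settled by $3$--dimensional Cerf theory relative to a neighborhood of $L$. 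So the viable strategy is a reduction to classical Cerf theory in one dimension lower, while your proposal keeps all the difficulty in the Morse $2$--function setting, where it remains exactly where you left it: unproven.
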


\begin{corollary} \label{C:UniquenessDiagrams}
 Given trisection diagrams $(F_g,\alpha,\beta,\gamma)$ and $(F_{g'},\alpha',\beta',\gamma')$, the corresponding $4$--manifolds $X(F_g,\alpha,\beta,\gamma)$ and $X(F_{g'},\alpha',\beta',\gamma')$ are diffeomorphic if and only if $(F_g,\alpha,\beta,\gamma)$ and $(F_{g'},\alpha',\beta',\gamma')$ are related by stabilization, handle slides, and diffeomorphism. ``Handle slides'' are slides of $\alpha$'s over $\alpha$'s, $\beta$'s over $\beta$'s and $\gamma$'s over $\gamma$'s.
\end{corollary}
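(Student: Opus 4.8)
The plan is to deduce this corollary directly from Theorem~\ref{T:Uniqueness} and Lemma~\ref{L:Stabilization}, together with the classical fact that any two complete systems of compressing disks for a fixed handlebody are related by handle slides and isotopy. I would prove the two implications separately.

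For the easy (``if'') direction, I would check that each of the three allowed moves leaves the diffeomorphism type of $X(F_g,\alpha,\beta,\gamma)$ unchanged. A diffeomorphism of $F_g$ carrying one diagram to another visibly extends over the whole trisected manifold. A slide of one $\alpha$ curve over another replaces $\alpha$ by a different complete system of compressing curves for the \emph{same} handlebody $H_{12}$, leaving $H_{23}$, $H_{31}$ and all the gluings untouched, so the trisection and hence $X$ is unchanged; the same holds for $\beta$-- and $\gamma$--slides. Finally, by Lemma~\ref{L:Stabilization} a stabilization of the diagram corresponds to the geometric stabilization of Definition~\ref{D:Stabilization}, which only inserts standard $B^4$ neighborhoods of boundary-parallel arcs and so amounts to a connected sum with the genus~$3$ trisection of $S^4$; this does not change the diffeomorphism type.

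For the substantive (``only if'') direction, I would start from a diffeomorphism $\Phi\colon X(F_g,\alpha,\beta,\gamma)\to X(F_{g'},\alpha',\beta',\gamma')$ and pull the second trisection back through $\Phi$, so that both diagrams now describe trisections of the \emph{same} $4$--manifold $X$; the pullback changes the second diagram only by the diffeomorphism $\Phi^{-1}$, which is an allowed move. Theorem~\ref{T:Uniqueness} then says that, after stabilizing each trisection some number of times, there is a diffeomorphism $h\colon X\to X$ isotopic to the identity carrying the first trisection to the second, hence the handlebodies $H_{ij}$ to the corresponding $H_{ij}'$ and the central surface $F$ to $F'$. By Lemma~\ref{L:Stabilization} these stabilizations are realized on the level of diagrams, so I would reduce to the stabilized diagrams and then compose with the surface diffeomorphism $h|_F$ (again an allowed move).

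At this point $h(\alpha)$ and $\alpha'$ are two complete systems of compressing curves for one and the same handlebody, and I would invoke the classical handlebody fact to conclude that they differ by slides of $\alpha$ curves over $\alpha$ curves and an ambient isotopy; the identical argument handles $\beta$ and $\gamma$, using that slides within one system never disturb the other two handlebodies. The hard part is really all packaged in Theorem~\ref{T:Uniqueness}: once that produces a diffeomorphism matching the three pieces, the only remaining subtlety is the passage from ``$h$ matches the handlebodies'' to ``$h$ matches the curve systems up to handle slides,'' and this is exactly where the handlebody fact must be applied with care, since the diffeomorphism supplied by Theorem~\ref{T:Uniqueness} need not send the chosen compressing disks of one diagram to those of the other.
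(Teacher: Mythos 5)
Your proposal is correct and takes essentially the same route as the paper: the paper's own one-sentence proof cites precisely the classical fact you invoke at the end --- that any two systems of $g$ compressing disks (equivalently, handle decompositions with one $0$--handle and $g$ $1$--handles) for a fixed genus $g$ handlebody are related by handle slides~\cite{Johannson} --- with the reduction via Theorem~\ref{T:Uniqueness} and Lemma~\ref{L:Stabilization} left implicit. You have simply written out in full the steps the paper takes for granted.
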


\begin{proof}
 Any two handle decompositions of a fixed genus $g$ handlebody, each with one $0$--handle and $g$ $1$--handles, are related by handle slides; this is proved in~\cite{Johannson}.
\end{proof}

\section{Discussion and examples} \label{S:Examples}

We begin with a few explicit examples of trisections and corresponding trisection diagrams.
\begin{itemize}
 \item $S^4 \subset \C \times \R^3$ can be explicitly divided into three pieces $X_j = \{(r e^{i \theta}, x_3, x_4, x_5) \mid 2\pi j/3 \leq \theta \leq 2\pi(j+1)/3 \}$, giving a genus $0$ trisection of $S^4$. The diagram is $S^2$ with no curves.
 \item Stabilizing the genus $0$ trisection of $S^4$ gives a genus $3$ trisection, with trisection diagram shown in Figure~\ref{F:S4nontrivial}. Since it is not known if the mapping class group of $S^4$ is trivial, we cannot say that the diagram determines the trisection up to isotopy, but the original description of stabilization of trisections (as opposed to stabilization of trisection diagrams) does determine this trisection up to isotopy, and thus we call this the {\em standard genus $3$ trisection of $S^4$}.
 \item There is an obvious {\em connected sum} operation on trisected $4$--manifolds, obtained by removing standardly trisected balls from each manifold and gluing along the boundary spheres so as to match the trisections. Stabilization can then also be defined as performing a connected sum with $S^4$ with its standard genus $3$ trisection.
 \item The standard toric picture of $\C P^2$ as a right triangle gives a natural trisection into three pieces $X_1, X_2, X_3$ as the inverse images under the moment map of the three pieces of the right triangle shown in Figure~\ref{F:ToricCP2}. These pieces are diffeomorphic to $B^4$ but they intersect along solid tori all meeting along a central fiber diffeomorphic to $T^2$, so that this is a genus $1$ trisection of $\C P^2$. The trisection diagram shows a $(1,0)$--, a $(0,1)$-- and a $(1,1)$-- curve; this is because the normals to the edges of the moment polytope tell us the direction in the torus which collapses along that edge. Alternatively, this trisection can be seen simply as the $0$--handle, $2$--handle and $4$--handle in the standard handle decomposition of $\C P^2$, and the $+1$ framing on the $2$--handle can be seen in the $(1,1)$--curve.
\begin{figure}
\labellist
\tiny\hair 2pt
\pinlabel $X_1$ at 25 55
\pinlabel $X_2$ at 25 25
\pinlabel $X_3$ at 55 25
\pinlabel $\alpha$ [r] at 160 24
\pinlabel $\beta$ [br] at 204 45
\pinlabel $\gamma$ [l] at 215 52
\endlabellist
\centering
 \includegraphics{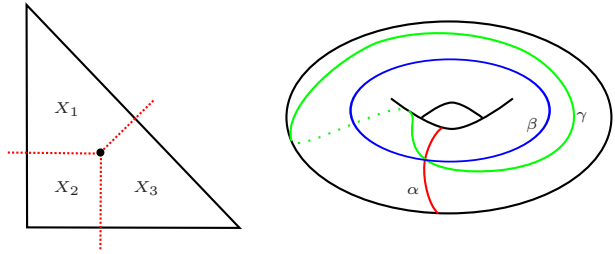}
 \caption{\label{F:ToricCP2} Trisection of $\C P^2$.}
\end{figure}
 \item Reversing the orientation of the central surface in a trisection diagram reverses the orientation of the $4$--manifold; i.e. $X(F_g,\alpha,\beta,\gamma) = -X(-F_g,\alpha,\beta,\gamma)$. Thus $\overline{\C P^2}$ has a genus $1$ trisection, with trisection diagram given by a $(1,0)$--, $(0,1)$-- and $(1,-1)$--curve.
 \item Looking at the standard toric picture of $S^2 \times S^2$ as a square also leads to a natural trisection of $S^2 \times S^2$ as follows: We divide the square into four regions labelled $X_1$, $X_{2a}$, $X_{2b}$ and $X_3$ as indicated in Figure~\ref{F:ToricS2XS2}, and label the inverse images of these regions in $S^2 \times S^2$ with the same labels.
\begin{figure}
\labellist
\tiny\hair 2pt
\pinlabel $X_1$ at 44 44
\pinlabel $X_{2a}$ at 110 35
\pinlabel $X_{2b}$ at 30 110
\pinlabel $X_3$ at 104 104
\endlabellist
\centering
 \includegraphics{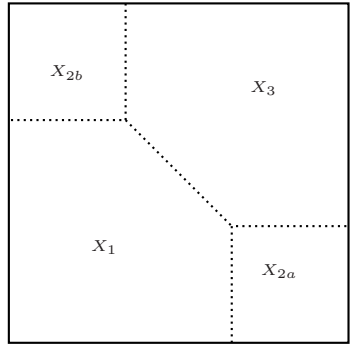}
 \caption{\label{F:ToricS2XS2} Trisection of $S^2 \times S^2$.}
\end{figure}
Each of $X_1$, $X_{2a}$, $X_{2b}$ and $X_3$ is a $4$--ball, and in fact they give the standard handle decomposition of $S^2 \times S^2$, with $X_1$ being the $0$--handle, $X_{2a}$ and $X_{2b}$ being the $2$--handles and $X_3$ being the $4$--handle. Note that $X_1 \cap X_3$ is $T^2 \times [0,1]$, with $T^2 \times \{0\}$ being $X_1 \cap X_3 \cap X_{2a}$ and $T^2 \times \{1\}$ being $X_1 \cap X_3 \cap X_{2b}$. Let $p$ be a point in $T^2$ and let $a$ be the arc $\{p\} \times [0,1] \subset X_1 \cap X_3$. Remove a tubular neighborhood of this arc from $X_1$ and $X_3$ and add it as a tube joining $X_{2a}$ to $X_{2b}$. The union of $X_{2a}$ and $X_{2b}$ with this tube is the $X_2$ of our trisection, and the new $X_1$ and $X_3$ are the results of removing the tube from the original $X_1$ and $X_3$. A little thought shows that this is a trisection with $k=0$ (each piece is a $4$--ball) and $g=2$. (Thanks to Bob Edwards for giving us the initial picture that led to this description.)
\item It may not be entirely obvious how to draw the trisection diagram for the above trisection of $S^2 \times S^2$. However, it is not hard to draw a genus $2$ trisection diagram from scratch that does give $S^2 \times S^2$. In Figure~\ref{F:VariousGenus2} we show this diagram, as well as a diagram for $S^2 \widetilde{\times} S^2$ and a diagram for $S^1 \times S^3$. We leave it to the reader to see how to relate these diagrams to the standard handle diagrams for these $4$--manifolds. It is also an illuminating exercise, knowing that $S^2 \widetilde{\times} S^2 \cong \C P^2 \sharp \overline{\C P^2}$, to verify Corollary~\ref{C:UniquenessDiagrams} in this case. The earlier discussion of connected sums and of $\pm \C P^2$ gives a trisection diagram for $\C P^2 \sharp \overline{\C P^2}$ and one checks that this is equivalent to that in Figure~\ref{F:VariousGenus2} for $S^2 \widetilde{\times} S^2$ via handle slides and diffeomorphism of $F_g$. (It turns out that in this case we do not need stabilization.)
\begin{figure}[h]
\labellist
\tiny\hair 2pt
\pinlabel {$S^2 \times S^2$} [b] at 54 49
\pinlabel {$S^2 \widetilde{\times} S^2$} [b] at 170 49
\pinlabel {$S^1 \times S^3$} [b] at 272 50
\endlabellist
\centering
 \includegraphics{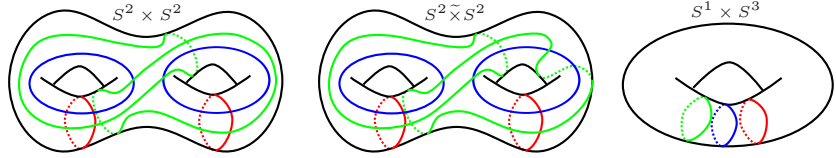}
 \caption{\label{F:VariousGenus2} Various genus $2$ trisection diagrams.}
\end{figure}

\end{itemize}

Now we briefly discuss trisection diagrams more generally. Given a trisection diagram $(F_g,\alpha,\beta,\gamma)$, the $4$--manifold $X(F_g,\alpha,\beta,\gamma)$ is constructed by attaching $4$--dimensional $2$--handles to $F_g \times D^2$ along $\alpha \times \{1\}$, $\beta \times \{e^{2 \pi i/3}\}$ and $\gamma \times \{e^{4\pi i/3}\}$, with framings coming from $F_g \times \{p\}$, and the remainder of $X$ is $3$-- and $4$--handles. Recall that there is a unique way, up to diffeomorphism, to attach the $3$-- and $4$--handles~\cite{LaudenbachPoenaru}.

Since each of $(F_g,\alpha,\beta)$, $(F_g, \beta,\gamma)$ and $(F_g, \gamma, \alpha)$ is a Heegaard diagram for $\sharp^k (S^1 \times S^2)$, each can, after a sequence of handle slides, be made to look like the standard genus $g$ Heegaard diagram of $\sharp^k (S^1 \times S^2)$. However, there is no reason to expect that we can simultaneously arrange for all three pairs of sets of curves to be standard.

Figure~\ref{F:GeneralTrisectionDiagram}  illustrates a general trisection diagram (except that only one $\gamma$ curve is shown) where we have made the $(F_g,\alpha,\beta)$ standard, where $\alpha$ is red and $\beta$ is blue; the reds and blues give the standard genus $g$ Heegaard diagram for $\sharp^k (S^1 \times S^2)$. The important point is that most of the information about the $4$--manifold $X$ is then carried by the $\gamma$ curves (one of which is drawn here in green). These green curves can be drawn anywhere with the proviso that some sequence of handle slides of the greens amongst the greens and the reds amongst reds, followed by a diffeomorphism of $F_g$, can make the reds and greens look like the reds and blues. The same proviso holds for the greens and blues, but a different sequence of handle slides and a different diffeomorphism may be required.
\begin{figure}
\labellist
\tiny\hair 2pt
\pinlabel $k$ [t] at 126 8
\pinlabel $g-k$ [t] at 332 10
\pinlabel $\alpha_1$ [r] at 45 44
\pinlabel $\beta_1$ [l] at 49 42
\pinlabel $\gamma_1$ [b] at 51 96
\pinlabel $\alpha_{k+1}$ [l] at 277 44
\pinlabel $\beta_{k+1}$ [r] at 267 68
\endlabellist
\centering
 \includegraphics[scale=.9]{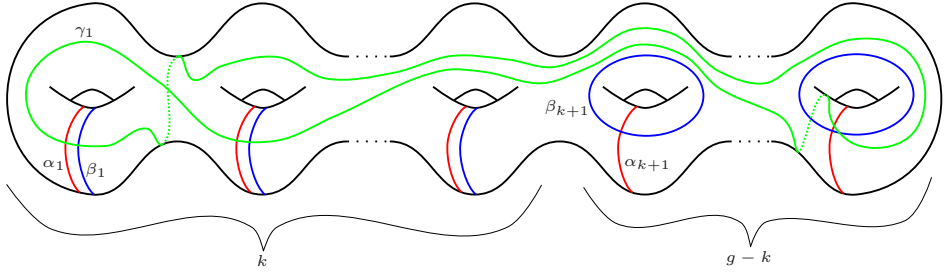}
 \caption{\label{F:GeneralTrisectionDiagram} A general trisection diagram; only one $\gamma$ curve is drawn, although there should be $g$ of them.}
\end{figure}

In fact, if a trisection diagram is drawn so that $\alpha$'s and $\beta$'s are standard as in Figure~\ref{F:GeneralTrisectionDiagram}, then a framed link diagram for $X(F_g,\alpha,\beta,\gamma)$ is obtained by erasing the last $(g-k)$ $\alpha$'s and $\beta$'s (which appear as meridian--longitude pairs) and then replacing each of the first $k$ parallel pairs of $\alpha$'s and $\beta$'s by a parallel dotted circle ($1$--handle) pushed slightly out of $F_g$. The $\gamma$'s remain as the attaching maps for $2$--handles, and their framings come from the surface $F_g$.

\textbf{An extended example: $3$--manifold bundles over $S^1$.} (Thanks to Stefano Vidussi for asking interesting questions that led to this example.) Suppose $X^4$ fibers over $S^1$, $M \into X \to S^1$, with fiber a closed, connected, oriented $3$--manifold $M^3$, and monodromy $\mu: M \to M$.

A trisection of $X$ is not immediately obvious, just as a bisection (Heegaard splitting) is not immediate when a $3$--manifold fibers over a circle: $F_g \into M \to S^1$.

In the latter case, one takes two fibers over distinct points of $S^1$, separating $M$ into two copies of $I \times F$.  Choose a Morse function on $F$ with one critical point of index $2$ and thus one $2$--handle $H$.  Remove $I \times H$ from one $I \times F$ and add it to the other copy of $I \times F$.  This turns the first copy into a handle body with $2g$ $1$--handles, and adds a $1$--handle to the second copy.  Again let $H$ be the $2$--handle of the second copy (disjoint from the first $H$), and remove $I\times H$ from the second copy of $I \times F$ and add it to the first copy.  Now both copies are handle bodies with $2g+1$ $1$--handles and we have the desired Heegaard spitting.

In the $4$--dimensional case, $X^4 = S^1 \times_{\mu} M$, pick a Morse function $\tau_0 : M \to [0,3]$ with only one critical point $\hat{x}$ of index $3$ and only one $\bar{x}$ of index $0$ ($\tau_0$ could give a minimal genus Heegaard splitting if desired).

Then $\tau_0 \mu$ is another Morse function on $M$ with the same kind of critical points, and $\mu$ can be isotoped so as to fix the maximum $\hat{x}$ and the minimum $\bar{x}$.  Then there is a homotopy $\tau_t: M \to [0,3], t \in [0,1]$, such that

\begin{enumerate}

\item $\tau_1 = \tau_0 \mu$, 

\item $\tau_t = \tau_0 = \tau_0 \mu$ on $\hat{x}$ and $\bar{x}$ and there are no other definite critical points of $\tau_t$,

\item $\tau_t$ is a Morse function for all but a finite number of values of $t$ at which $\tau_t$ has a birth or a death of a cancelling pair of indefinite critical points.

\end{enumerate}

Since $S^1 = [0,1]/0 \sim 1$, property (1) allows us to define
$$\tau: X^4 = ([0,1] \times M)/(1,x) \sim (0,\mu (x)) \to S^1 \times [0,3]$$
by setting $\tau (t,x) = (t,\tau_t (x))$.  To check, note that $\tau(1,x) = (1,\tau_1(x)) = (0, \tau_0 (\mu (x))) = \tau (0, \mu (x))$.

Thus we have a smoothly varying family of Morse functions on the fibers of $X$, except for the births and deaths.  There are an equal number of births and deaths because $\tau_0$ and $\tau_0 \mu$ have the same number of critical points.  Then we can make all the births happen earlier at $t=0$ and the deaths later at $t=1$, and furthermore by an isotopy of $\mu$, the births and deaths can be paired off and happen at the same points of $M$.  In that case the pairs can be merged and then $\tau$ is a family of Morse functions of the fibers of $X$ with only one fixed maximum and minimum and $g$ critical points of indices $1$ and $2$.  Furthermore, it is straightforward to arrange that all critical points of index $1$ (resp $2$) take values in a small neighborhood of $1$ (resp. $2$) for each $t \in S^1$.

Now draw a hexagonal-like grid on $[0,1]\times [0,3]$ as in Figure~\ref{F:FiberOverS1} and label the boxes with $X_i, i = 1,2,3$.  Recall that the left and right ends are identified so as to have $S^1 \times [0,3]$.
\begin{figure}
\labellist
\tiny\hair 2pt
\pinlabel $X_1$ at 19 31
\pinlabel $X_2$ at 64 31
\pinlabel $X_3$ at 123 31
\pinlabel $X_1$ at 168 31
\pinlabel $X_3$ at 34 65
\pinlabel $X_1$ at 94 65
\pinlabel $X_2$ at 153 65
\pinlabel $(0,0)$ [tr] at 6 4
\pinlabel $(0,1)$ [r] at 4 32
\pinlabel $(0,2)$ [r] at 4 64
\pinlabel $(0,3)$ [r] at 4 92
\pinlabel $(1,0)$ [t] at 183 3
\pinlabel $(1,3)$ [l] at 183 92
\pinlabel $F_g$ [bl] at 64 55
\pinlabel {$H_g = \natural^g S^1 \times B^2$} [t] at 82 2
\pinlabel {\rotatebox{34}{$I \times F_g$}} [l] at 100 42

\endlabellist
\centering
 \includegraphics{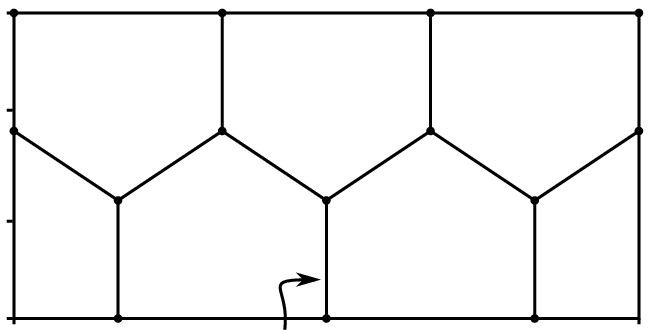}
 \caption{\label{F:FiberOverS1} Fibering over $S^1$.}
\end{figure}

The trisection of $X$ into $X_1 \cup X_2 \cup X_3$ is to be made by tube-connect summing the pre images under $\tau$ of the $X_i$'s in Figure~\ref{F:FiberOverS1}.  Over each vertical line segment in Figure~\ref{F:FiberOverS1} is $H_g$ which is defined to be a $3$--dimensional handle body with $g$ $1$--handles, so over the interior vertices lie surfaces $F_g$.  Over the diagonally sloped line segments lie $3$--manifolds $I \times F_g$.

Let $H$ be the $2$--handle in $F_g$ and define a $4$--dimensional $1$--handle to be a thickening of $I \times H$ into the bounding $X_i$'s on either side of $I \times Fg$.  Add such a $1$--handle to connect each $X_i$ to another $X_i$ across a sloping line segment, for $i = 1,2,3$.  Doing this twice for $X_1$, once along a SW-NE sloping line and once along a NW-SE sloping one as in Figure~\ref{F:FiberOverS1Tubing}, we see that $X_1$ has become connected and is a $4$--dimensional handlebody with $2g+1$ $1$--handles.  Similarly with $X_2$ and $X_3$.
\begin{figure}
\labellist
\tiny\hair 2pt
\pinlabel {$\natural^g S^1 \times B^3$} [r] at 2 24
\pinlabel $X_2$ at 64 31
\pinlabel $X_3$ at 123 31
\pinlabel $X_3$ at 34 65
\pinlabel {$\natural^g S^1 \times B^3$} at 94 65
\pinlabel $X_2$ at 153 65
\pinlabel {$1$--h} [tl] at 48 48
\pinlabel {$1$--h} [bl] at 138 48

\endlabellist
\centering
 \includegraphics{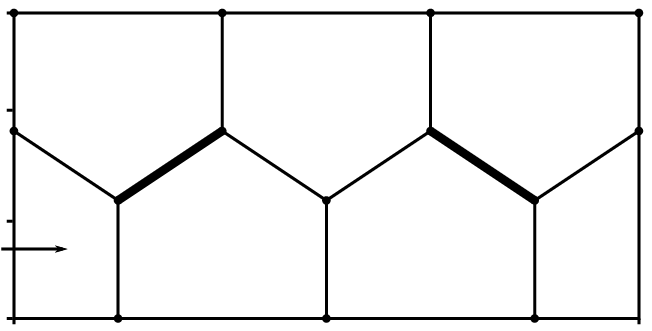}
 \caption{\label{F:FiberOverS1Tubing} Connect the regions with $1$--handles; here the $1$--handles connecting the $X_1$'s are highlighted.}
\end{figure}

Next we calculate $X_1 \cap X_2$. Its various parts are shown in Figure~\ref{F:FiberOverS1Pairs}.  Note that the sloping edges with labels $H_{2g}$ arise from $I\times F_g$ by having removed the $I \times H$.  Thus we have $H_g \cup H_{2g} \cup H_g \cup H_{2g} \cup 4$ $1$--handles, and three of the $1$--handles cancel $0$--handles leaving $H_{6g+1} = X_1 \cap X_2 = X_2 \cap X_3 = X_3 \cap X_1$.  Then the central fiber $F_{g'}$ of the trisection has genus $g' = 6g+1$ and gives a Heegaard splitting of $\partial X_i = \#_{2g+1} S^1 \times S^2$.  Note that $k = 2g+1$ and we can check that $\chi(X) = 0 = 2 + g' -3k$.
\begin{figure}
\labellist
\tiny\hair 2pt
\pinlabel $X_1$ at 19 28
\pinlabel $X_2$ at 64 28
\pinlabel $X_3$ at 123 28
\pinlabel $X_1$ at 168 28
\pinlabel $X_3$ at 34 68
\pinlabel $X_1$ at 94 68
\pinlabel $X_2$ at 153 68
\pinlabel {$1$--h} [bl] at 20 48
\pinlabel {$1$--h} [tl] at 48 48
\pinlabel {$1$--h} [tl] at 110 48
\pinlabel {$1$--h} [bl] at 138 48
\pinlabel $H_g$ [l] at 36 20
\pinlabel $H_g$ [l] at 125 75
\pinlabel {$H_{2g}$} [bl] at 79 48
\pinlabel {$H_{2g}$} [tl] at 168 48

\endlabellist
\centering
 \includegraphics{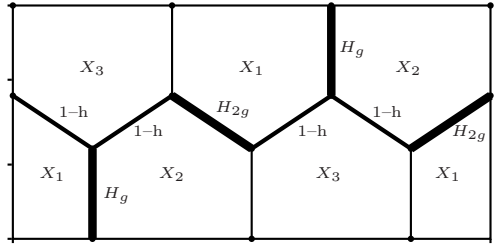}
 \caption{\label{F:FiberOverS1Pairs} Understanding the pairwise intersections when fibering over $S^1$.}
\end{figure}

\textbf{Surface bundles over $S^2$.} Now suppose that $X^4$ fibers over $S^2$ with fiber $F$ a closed surface of genus $g_F$. We construct a trisection in a similar fashion to the preceding example. 

Let $\pi : X \to S^2$ be the fibration. Identify $S^2$ with a cube and trisect $S^2$ as $S^2 = A_1 \cup A_2 \cup A_3$, where each $A_i$ is the union of two opposite (closed) faces of the cube. Choose disjoint sections $\sigma_1$, $\sigma_2$ and $\sigma_3$ over $A_1$, $A_2$ and $A_3$, respectively, and let $N_i$ be a closed tubular neighborhood of $\sigma_i$, for $i=1,2,3$, with the $N_i$'s also disjoint. The trisection of $X$ is $X=X_1 \cup X_2 \cup X_3$ where
\[ X_i = (\pi^{-1}(A_i) \setminus \mathring{N}_i) \cup N_{i+1}, \]
with indices taken mod $3$.

We now verify that this is indeed a trisection, and compute $g$ and $k$ along the way. First, $\pi^{-1}(A_i)$ is two copies of $D^2 \times F$. Next, removing $\mathring{N}_i$ leaves us with two copies of $D^2 \times F'$, where $F'$ has genus $g_F$ and one boundary component. Thus $\pi^{-1}(A_i) \setminus \mathring{N}_i$ has two $0$--handles and $4 g_F$ $1$--handles. Finally, $N_{i+1}$ is two $1$--handles connecting the two components of $\pi^{-1}(A_i) \setminus \mathring{N}_i$. Thus one of the $0$--handles is cancelled by one of these two $1$--handles, and we are left with one $0$--handle and $k=4g_F+1$ $1$--handles.

Now we consider the pairwise intersections. The $3$--dimensional intersection $X_1 \cap X_2$ is the union of four pieces:
\begin{itemize}
 \item $(\pi^{-1}(A_1) \setminus \mathring{N}_1) \cap (\pi^{-1}(A_2) \setminus \mathring{N}_2)$: Since $A_1$ and $A_2$ intersect along four edges of the cube, this is four copies of $[0,1] \times F''$, where $F''$ has genus $g_F$ and {\em two} boundary components. In other words, this $3$--manifold is built from four $0$--handles and $4(2g_F+1)=8g_F+4$ $1$--handles.
 \item $(\pi^{-1}(A_1) \setminus \mathring{N}_1) \cap N_3$: This sits over the four edges making up $A_1 \cap A_3$, and thus contributes four $1$--handles, two connecting two of the components above, and two connecting the other two. Cancelling two of the $0$--handles from the preceding step with two of these $1$--handles, we are left with two $0$--handles and $8g_F+6$ $1$--handles.
 \item $N_2 \cap (\pi^{-1}(A_2) \setminus \mathring{N}_2)$: This is just $\partial N_2$, which is two copies of $D^2 \times S^1$, joining up the four copies of $[0,1] \times F''$, from the first step above, in pairs, with the $S^1$ factor in $D^2 \times S^1$ lining up with one of the boundary components of the $F''$ factor of $[0,1] \times F''$. Thus we get two new $1$--handles and two new $2$--handles. One of the $1$--handles cancels a $0$--handle, and both $2$--handles cancel $1$--handles. This leaves us with one $0$--handle and $8g_F+6+1-2=8g_F+5$ $1$--handles.
 \item $N_2 \cap N_3$: This is empty.
\end{itemize}
Thus $X_1 \cap X_2$ is a $3$--dimensional handlebody with genus $g=8g_F+5$, and the same holds for $X_2 \cap X_3$ and $X_3 \cap X_1$.

The triple intersection is necessarily the boundary of each pairwise intersection, so we see that we have a trisection with $k=4g_F+1$ and $g=8g_F+5$. This gives $\chi = 2+g-3k = 4-4g_F$, which is what we expect for a genus $g_F$ bundle over $S^2$.

When this technique is applied to $S^2 \times S^2$ we get the genus $5$ diagram in Figure~\ref{F:S2XS2Cube}. With some work this can be shown to be handle slide and diffeomorphism equivalent to a single stabilization of the genus $2$ diagram of $S^2 \times S^2$ in Figure~\ref{F:VariousGenus2}.
\begin{figure}
\labellist
\tiny\hair 2pt
\endlabellist
\centering
 \includegraphics[width=2in]{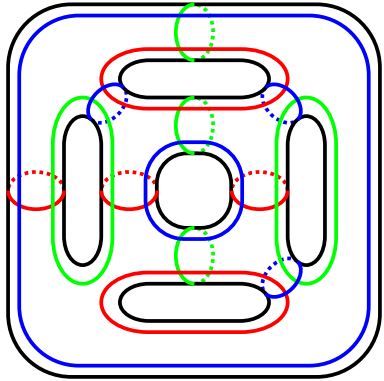}
 \caption{\label{F:S2XS2Cube} A genus $5$ trisection diagram for $S^2 \times S^2$ obtained by seeing $S^2 \times S^2$ as an $S^2$ bundle over a cube. The surface shown here is naturally the boundary of a tubular neighborhood of the $1$--skeleton of a cube.}
\end{figure}

\textbf{Gluing maps:} A $4$--manifold $X$ with a trisection $(X_1,X_2,X_3)$ is determined {\em up to
diffeomorphism} by the data of $k$, $g$ and three gluing maps between the sectors; see Figure~\ref{F:Gluing}. Here we discuss this gluing data carefully and show how to reduce the data to two elements of the mapping class group of a closed genus $g$ surface satisfying certain constraints. 
\begin{figure}
\labellist
\tiny\hair 2pt
\pinlabel $X_1$ at 22 71
\pinlabel $X_2$ at 53 20
\pinlabel $X_3$ at 80 73
\pinlabel $\phi_1$ [tr] at 43 50
\pinlabel $\phi_2$ [tl] at 60 49
\pinlabel {$\phi_1^{-1}\phi_2^{-1}$} [b] at 52 65
\pinlabel $\psi_1$ [tr] at 25 40
\pinlabel $\psi_2$ [tl] at 78 38
\pinlabel $\psi_3$ [b] at 52 86
\pinlabel $Y_{k,g}^+$ [r] at 41 84
\pinlabel $Y_{k,g}^-$ [br] at 23 48
\pinlabel $Y_{k,g}^+$ [tl] at 32 31
\pinlabel $Y_{k,g}^-$ [tr] at 74 31
\pinlabel $Y_{k,g}^+$ [bl] at 82 48
\pinlabel $Y_{k,g}^-$ [l] at 63 85
\pinlabel $F_g$ at 51 55
\endlabellist
\centering
 \includegraphics[scale=2]{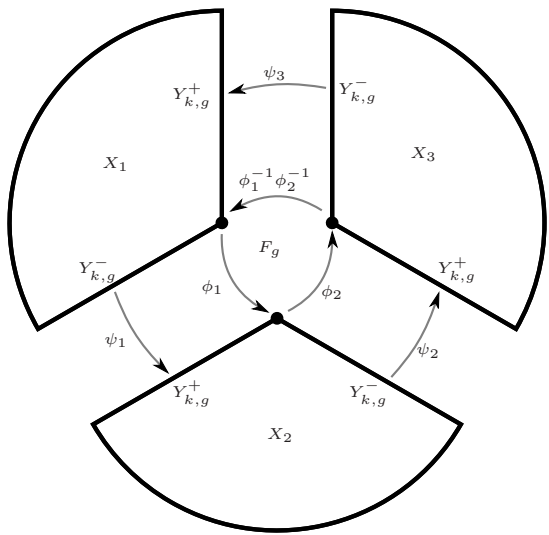}
 \caption{\label{F:Gluing} Gluing maps.}
\end{figure}

Let $X_1$, $X_2$ and $X_3$ be copies of $Z_k = \natural^k (S^1 \times B^3)$. Let $Y_k = \partial Z_k = Y_{k,g}^+ \cup Y_{k,g}^-$ be the
standard genus $g$ Heegaard splitting of $Y_k = \sharp^k (S^1 \times S^2)$ with $H_{k,g} = Y_{k,g}^+ \cap Y_{k,g}^-$ the Heegaard surface, with a fixed identification $H_{k,g} \cong F_g$.
We can then construct a $4$--manifold with three diffeomorphisms $\psi_i \co Y_{k,g}^- \to - Y_{k,g}^+$, for $i=1,2,3$, such that $\psi_i$ glues $X_{i}$ to $X_{i+1}$ (indices taken mod. $3$) by gluing the copy of $Y_{k,g}^-$ in $\partial X_{i}$ to the copy of $Y_{k,g}^+$ in $\partial X_{i+1}$. Let $\phi_i = \psi_i |_{F_g} \co F_g \to F_g$ and note that we need $\phi_3 \circ \phi_2 \circ \phi_1$ to be isotopic to the identity in order for the resulting manifold to close at the central fiber $F_g$. Furthermore, since an automorphism of a $3$--dimensional handlebody is completely determined up to isotopy by its restriction to the boundary surface, this entire construction is actually determined by the two (isotopy classes of) maps $\phi_1,\phi_2 \co F_g \to F_g$, with $\phi_3 = \phi_1^{-1} \circ \phi_2^{-1}$.

However, this characterization is slightly misleading because an arbitrary pair $\phi_1, \phi_2$ of mapping classes of $F_g$ does not necessarily produce a trisected $4$--manifold: We need that each of $\phi_1$, $\phi_2$ and $\phi_1^{-1} \circ \phi_2^{-1}$ extends to a diffeomorphism $\psi_i \co Y_{k,g}^- \to - Y_{k,g}^+$, a slightly messy condition that is not entirely trivial to check.

\textbf{Gluing maps from model manifolds:} In fact we can reduce the gluing map data to a single gluing map if we construct trisected $4$--manifolds by cutting open and regluing fixed model trisected manifolds. For each $0 \leq k \leq g$ let $X^{k,g} = (\sharp^k S^1 \times S^3) \sharp (\sharp^{g-k} \C P^2)$. Note that $X^{k,g}$ has a standard $(k,g)$--trisection $X^{k,g}=(X^{k,g}_1,X^{k,g}_2,X^{k,g}_3)$, because $S^1 \times S^3$ has a standard $(1,1)$--trisection and $\C P^2$ has a standard $(0,1)$--trisection. Also, for each such $(k,g)$, fix an identification of $X^{k,g}_1 \cap X^{k,g}_2$ with the standard genus $g$ handlebody $H_g = \natural^g S^1 \times B^2$. Then any other $4$--manifold $X$ with a $(k,g)$--trisection is obtained from $X^{k,g}$ by cutting $X^{k,g}_1$, $X^{k,g}_2$ and $X^{k,g}_3$ apart, regluing $X^{k,g}_1$ to $X^{k,g}_2$ by some automorphism $\phi$ of $X^{k,g}_1 \cap X^{k,g}_2 = H_g$, and then observing that gluing in $X^{k,g}_3$ amounts to attaching a collection of $3$--handles and a $4$--handle, so that no other gluing data needs to be specified. Again, not any automorphism $\phi: H_g \to H_g$ will work, but now one needs to verify that $\partial(X^{k,g}_1 \cup_\phi X^{k,g}_2)$ is diffeomorphic to $\sharp^k (S^1 \times S^2)$ in order to verify that $\phi$ actually produces a closed trisected $4$--manifold.

\textbf{Lagrangians, Maslov index, signature and intersection triples:}
Given a genus $g$ trisection diagram $(F_g,\alpha,\beta,\gamma)$, one can write down a triple $(Q_{\alpha \beta}, Q_{\beta \gamma}, Q_{\gamma \alpha})$ of $g \times g$ integer matrices, giving the intersection pairing between curves. Our uniqueness theorem tells us that this {\em intersection triple} is uniquely determined by the diffeomorphism type of $X(F_g,\alpha,\beta,\gamma)$ up to elementary row-column operations and stabilization. Here, the row-column operations are precisely those corresponding to handle slides. Thus, for example, sliding $\alpha_1$ over $\alpha_2$ corresponds to adding row $2$ to row $1$ in $Q_{\alpha \beta}$ while {\em simultaneously} adding column $2$ to column $1$ in $Q_{\gamma \alpha}$. Stabilization replaces $(Q_{\alpha \beta}, Q_{\beta \gamma}, Q_{\gamma \alpha})$ with the following triple:
\[ \left( \left[ \begin{array}{c|c}
                  Q_{\alpha \beta} & 0 \\
                  \hline
                  0 & \begin{array}{ccc} 1 & 0 & 0 \\ 0 & 1 & 0 \\ 0 & 0 & 0 \end{array}
                 \end{array} \right],
          \left[ \begin{array}{c|c}
                  Q_{\beta \gamma} & 0 \\
                  \hline
                  0 & \begin{array}{ccc} 1 & 0 & 0 \\ 0 & 0 & 0 \\ 0 & 0 & 1 \end{array}
                 \end{array} \right],
          \left[ \begin{array}{c|c}
                  Q_{\gamma \alpha} & 0 \\
                  \hline
                  0 & \begin{array}{ccc} 0 & 0 & 0 \\ 0 & 1 & 0 \\ 0 & 0 & 1 \end{array}
                 \end{array} \right]
 \right) \]
The fact that each pair of collections of curves gives a Heegaard diagram for $\sharp^k S^1 \times S^2$ tells us that each of the three matrices is, independently, row-column equivalent to $\left[ \begin{array}{cc} 0_k & 0 \\ 0 & I_{g-k} \end{array} \right]$. We thus have an invariant of $4$--manifolds taking values in this set of triples, subject to this $\sharp^k S^1 \times S^2$ condition, modulo an interesting equivalence relation. Of course, this invariant may contain nothing more than homological information, for example, but even if that were true it would be interesting to understand exactly how this works.

Alternatively, one can define three Lagrangian subspaces $(L_\alpha, L_\beta, L_\gamma)$ in the symplectic vector space $V = H_1(F_g ; \R)$; i.e. $L_\alpha$ is the kernel of the map $H_1(F_g; \R) \to H_1(H_\alpha; \R)$ where $H_\alpha$ is the handlebody determined by the $\alpha$ curves, and so on. One immediately recovers the three intersection matrices above as the symplectic form on $V$ restricted to each pair of Lagrangians, relative to chosen bases on the Lagrangians. Thus our uniqueness theorem also gives us a $4$--manifold invariant taking values in the set of quadruples $(V,L_\alpha,L_\beta,L_\gamma)$, where $V$ is a symplectic vector space and the $L$'s are linear Lagrangian subspaces, subject again to the $\sharp^k S^1 \times S^2$ condition, modulo an equivalence relation. This equivalence relation is linear symplectomorphism and stabilization, which in this case means taking the direct sum with $(\R^6,\langle x_1,x_2,y_3 \rangle, \langle x_1,y_2,x_3 \rangle, \langle y_1,x_2,x_3 \rangle)$.

This Lagrangian set up has in fact been studied in the more general context of Wall's~\cite{wall} nonadditivity of the signature. A direct application of the interpretation in~\cite{clm} of Wall's nonadditivity result shows that the signature of a closed $4$--manifold with a trisection is precisely the Maslov index of this associated triple of Lagrangians.

However, one expects more information to be encoded in these Langrangians triples than just the signature. In particular, the Maslov index ignores the integer lattice structure of $H_1(F_g,\Z) \subset H_1(F_g,\R)$. Quotienting out by this lattice gives us a triple of Lagrangian $g$--tori in a symplectic $2g$--torus, and one again gets a $4$--manifold invariant taking values in these triples mod symplectomorphism and stabilization. It seems that a further study of this set up could be fruitful.

\textbf{Curve complex perspective:} To record much more data than simply the homology classes of curves which bound disks in the three handlebodies, we can consider, for each handlebody $H_{12}$, $H_{23}$ and $H_{31}$, the subsets $U_{12}$, $U_{23}$ and $U_{31}$, respectively of the curve complex for $F_g$ given by those essential simple closed curves which bound disks in the respective handlebody. Because each pair of handlebodies gives $\sharp^k (S^1 \times S^2)$, we know that the three intersections $U_{12} \cap U_{23}$, $U_{23} \cap U_{31}$ and $U_{31} \cap U_{12}$ are nonempty. This perspective raises many interesting questions, such as: What is the minimal area of a triangle with vertices in the three intersections? If $U_{12} \cap U_{23} \cap U_{31}$ is nonempty, what does that tell us about $X$? If the gluing map coming from the model manifold construction described above is, for example, pseudo-Anosov, does this tell us that the three subcomplexes are ``far apart'' in any sense?

\section{Existence via Morse $2$--functions}

The proof presented in this section is an
application of tools developed in~\cite{gk1}, using Morse $2$--functions. In the following section we will rewrite the
proof entirely in terms of ordinary Morse functions
and handle decompositions, but the trisection is so natural from the
point of view of Morse $2$--functions that we feel this proof is worth presenting. However, to give the basic idea for those most comfortable
with the language of handle decompositions, our construction ends up
putting the $0$-- and $1$--handles of $X$ into $X_1$, the $3$-- and
$4$--handles into $X_3$, and the $2$--handles together with some
``connective tissue'' into $X_2$.

A Morse $2$--function is a smooth, stable map $G \co X^n \to
\Sigma^2$; in this paper we will always map to $\R^2$. (Stable implies
generic when mapping to dimension two.) Just like Morse functions,
Morse $2$--functions can be characterized by local models, and we now
give these local models only in the case of $n=4$, i.e. we are
considering an $\R^2$--valued Morse $2$--function $G$ on a
$4$--manifold $X$:
\begin{enumerate}
 \item Each regular value $q \in \R^2$ has a coordinate neighborhood
   over which $G$ looks like $F^2 \times B^2 \to B^2$ for some closed
   fiber surface $F$.
 \item The set of critical points of $G$ is a smooth $1$--dimensional
   submanifold $\Crit_G \subset X$ such that $G \co \Crit_G \to \R^2$
   is an immersion with isolated semicubical cusps and crossings. The
   non-cusp points of $\Crit_G$ are called {\em fold points}, and arcs
   of such points are called {\em folds}.
 \item Each point $q \in G(\Crit_G)$ which is not a cusp or crossing
   has a neighborhood $U = I \times I$ with coordinates $(t,y)$, with
   $G^{-1}(U)$ diffeomorphic to $I \times M^3$ for a $3$--dimensional
   cobordism $M$, so that $G(t,p) = (t,g(p))$, where $g: M \to I$ is a
   Morse function on $M$ with one critical point. The index of this
   critical point is then called the index of the fold, although this
   is only well-defined up to $i \mapsto 3-i$. When the image of the
   fold is co-oriented, the index is well-defined by insisting that
   the $y$--coordinate on $I \times I$ increases in the direction of
   this co-orientation. 
 \item Each cusp point $q \in G(\Crit_G)$ has a neighborhood $U = I
   \times I$ with coordinates $(t,y)$, with $G^{-1}(u) = I \times
   M^3$, so that $G(t,p) = (t,g_t(p))$, where $g_t$ is a
   $1$--parameter family of Morse functions on $M$ with no critical
   points for $t=0$ and a birth of a cancelling pair of critical
   points at $t=1/2$. In our examples, these two critical points will
   always be of index $1$ and $2$. 
 \item Each crossing point $q \in G(\Crit_G)$ has a neighborhood $U =
   I \times I$ with coordinates $(t,y)$, with $G^{-1}(u) = I \times
   M^3$, so that $G(t,p) = (t,g_t(p))$, where $g_t$ is a
   $1$--parameter family of Morse functions on $M$ with two critical
   points for all $t$, such that the critical values cross at
   $t=1/2$. In our examples, these two critical points will never be
   of index $0$ or $3$. 
\end{enumerate}

The basic example of a Morse $2$--function is $(t,p) \mapsto
(t,g_t(p))$ for an arbitrary generic homotopy $g_t$ between two given
Morse functions $g_0, g_1 \co M^3 \to [0,1]$, and the message of the
above local models is that Morse $2$--functions look locally like
homotopies between Morse functions, but globally we may not have a
preferred ``time'' direction. When $G$ is of the form $(t,p) \mapsto
(t,g_t(p))$, we call $G(\Crit_G)$ a {\em Cerf
  graphic}~\cite{cerf}. Conversely, given a Morse $2$--function $G \co
X^4 \to \R^2$ and a rectangle $I \times I \subset \R^2$ in which
$G(\Crit_G)$ has no vertical tangencies, we can find coordinates in
which $G$ is of this form $(t,p) \mapsto (t,g_t(p))$, and so again we
will say that $G(\Crit_G)$ is a Cerf graphic in this rectangle.

There is one move on Morse $2$--functions (i.e. local model for a generic homotopy between Morse $2$--functions) that is central to this paper, which we call the ``introduction of an eye''. In a local chart in which a given Morse $2$--function $G$ on a $4$--manifold has no critical points, we can assume $G$ has the form $(t,x,y,z) \mapsto (t,x)$ or, equivalently, $(t,x,y,z) \mapsto (t,x^3+(t^2+1)x-y^2+z^2)$ with $t \in [-2,2]$. Introducing a parameter $s \in [-1,1]$ we get a homotopy $(t,x,y,z) \mapsto (t,x^3+(t^2-s)x-y^2+z^2)$, with $s=-1$ corresponding to the given map and $s=1$ the end result of ``introducing an eye''. Figure~\ref{F:Eye} shows the image of the critical locus at $s=1$, justifying the terminology. Note that this is a Cerf graphic in which, as $t$ increases from $-2$ to $2$, we see a Morse function on $x,y,z$ space which starts with no critical points, develops a cancelling pair of index $1$ and $2$ critical points, and then the cancelling pair disappears again so that at 
$t=2$ there are again no critical points.
\begin{figure}
\labellist
\tiny\hair 2pt
\pinlabel $z$ [b] at 64 75
\pinlabel $t$ [l] at 128 38
\endlabellist
\centering
 \includegraphics{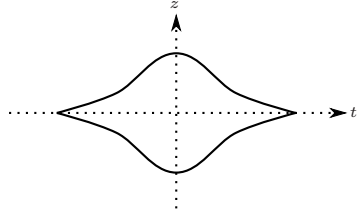}
 \caption{\label{F:Eye} The ``eye'', a Cerf graphic in which a pair of cancelling critical points is born and then dies.}
\end{figure}
Note also that the introduction of an eye takes place in a ball and is localized to a disk in the fiber cross a disk in the base; thus, as long as fibers are connected, we need only specify a disk in the base without critical points and then there is a unique, up to isotopy, way to introduce an eye in that disk.

\begin{proof}[Proof of Existence, Theorem~\ref{T:Existence}]
 Throughout we will use coordinates $(t,z)$ on $\R^2$, with $t$
 horizontal and $z$ vertical. Here is an outline of the proof: 
\begin{enumerate}
 \item First we will show that there is a Morse $2$--function $G_1 \co
   X \to \mathbb{R}^2$ such that the image of the fold locus is as in
   Figure~\ref{F:Folds1}. In this and the following figures, three
   dots between two curves indicate that there are some number of
   parallel copies of the two curves in between. Fold indices are
   indicated with labelled transverse arrows. Boxes with folds coming
   in from the left and out at the right represent arbitrary Cerf
   graphics, with the left-right axis being time. Note that a Cerf
   graphic may contain left-cusps, right-cusps and crossings, but may
   not contain any vertical tangencies on the image of the fold
   locus. 
\begin{figure}
\labellist
\tiny\hair 2pt
\pinlabel $t$ [l] at 353 17
\pinlabel $z$ [b] at 16 209
\pinlabel $0$ [t] at 40 16
\pinlabel $1$ [t] at 56 16
\pinlabel $1$ [t] at 72 16
\pinlabel $2$ [t] at 116 16
\pinlabel $2$ [t] at 140 16
\pinlabel $3$ [t] at 284 16
\pinlabel $3$ [t] at 304 16
\pinlabel $4$ [t] at 320 16
\pinlabel $0$ [tl] at 180 40
\pinlabel $3$ [tl] at 180 200
\pinlabel $1$ [tl] at 188 56
\pinlabel $2$ [tl] at 188 136
\pinlabel $1$ [tl] at 256 60
\pinlabel $2$ [tl] at 256 160
\endlabellist
\centering
 \includegraphics{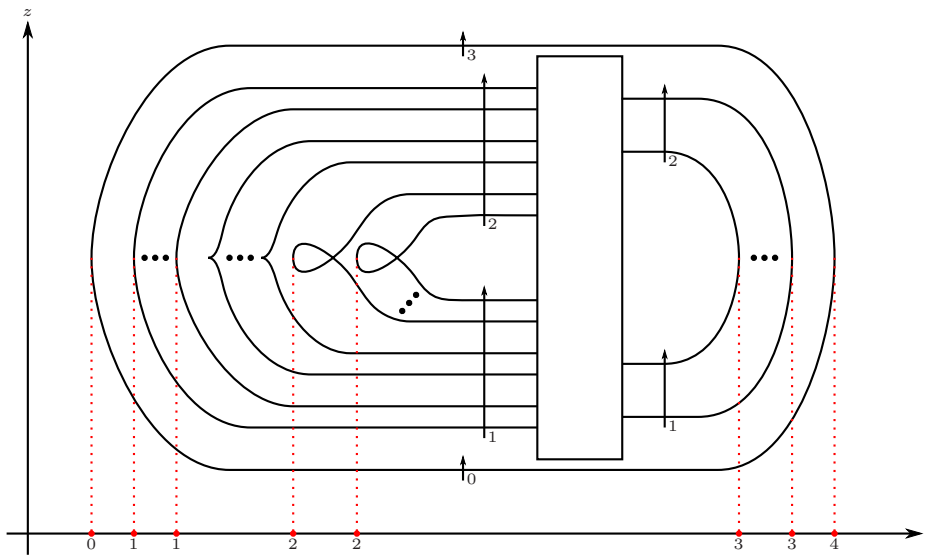}
 \caption{\label{F:Folds1} The image of the fold locus for $G_1$.}
\end{figure}

 \item In Figure~\ref{F:Folds1}, the vertical tangencies of the folds
   are highlighted in red; these become critical points of the
   projection $t \circ G_1 \co X \to \R$. These critical values in
   $\R$ are also indicated at the bottom of the diagram along the
   $t$--axis, with their indices. 

 \item After constructing $G_1$, we will show to homotope $G_1$ to
   $G_2$ such that the image of the fold locus for $G_2$ is as in
   Figure~\ref{F:Folds2}. Here the two Cerf graphics have no cusps. We
   have achieved two goals here: (1) Splitting the Cerf graphic into
   two, each involving only critical points of the same index and no
   cusps. (2) Replacing each kink that corresponds to an index $2$
   critical point of $t \circ G_1$ with a pair of cusps. 
\begin{figure}
\labellist
\tiny\hair 2pt
\pinlabel $0$ [tl] at 180 40
\pinlabel $1$ [tl] at 188 56
\pinlabel $2$ [tl] at 188 136
\pinlabel $3$ [tl] at 180 200
\pinlabel $1$ [tl] at 236 60
\pinlabel $2$ [tl] at 236 140
\endlabellist
\centering
 \includegraphics{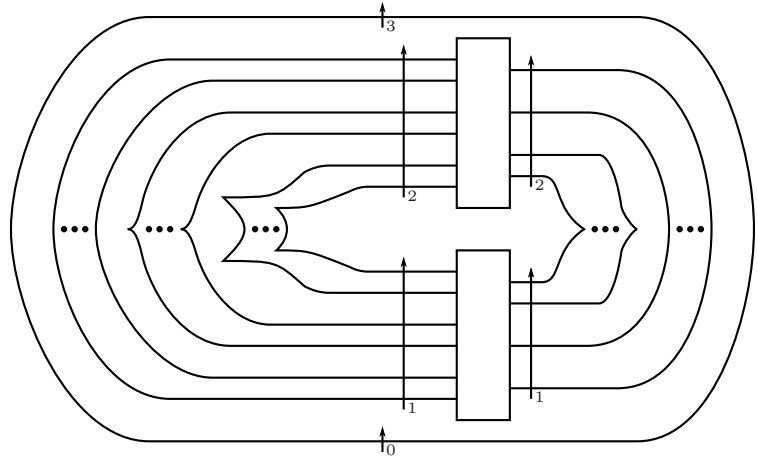}
 \caption{\label{F:Folds2} The image of the fold locus for $G_2$.}
\end{figure}

 \item Figure~\ref{F:Folds3} is simply a redrawing of
   Figure~\ref{F:Folds2} that highlights a natural trisection of
   $\R^2$ into three sectors $\R^2_1$, $\R^2_2$ and $\R^2_3$. Note
   that the critical locus over each sector consists of $g$
   components, where $g$ is the genus of the central fiber. Also, each
   such component has at most one cusp. We no longer indicate the
   indices of the folds; the outermost fold is index $0$ pointing
   inwards, and all other folds are index $1$ pointing in. 
\begin{figure}
\labellist
\tiny\hair 2pt
\pinlabel $\R^2_1$ at 24 206
\pinlabel $\R^2_2$ [tr] at 100 5
\pinlabel $\R^2_3$ [bl] at 234 164
\endlabellist
\centering
 \includegraphics[width=3in]{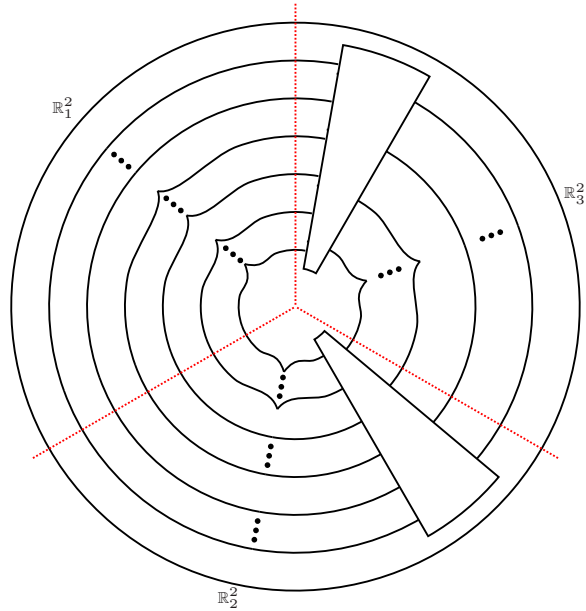}
 \caption{\label{F:Folds3} A more symmetric drawing of the image of the fold locus for $G_2$. We no longer indicate the indices of the folds; the outermost fold is index $0$ going inwards, the others are index $1$ going inwards.}
\end{figure}

 \item The form of the folds in Figure~\ref{F:Folds3} is a special
   case of the form shown in Figure~\ref{F:Folds4}, where now we are
   not paying attention to which folds in a given sector, with or
   without cusps, connect to which folds in the next sector, with or
   without cusps, and we allow for arbitrary Cerf graphics (without
   cusps) between the sectors. 
\begin{figure}
\centering
 \includegraphics[width=3in]{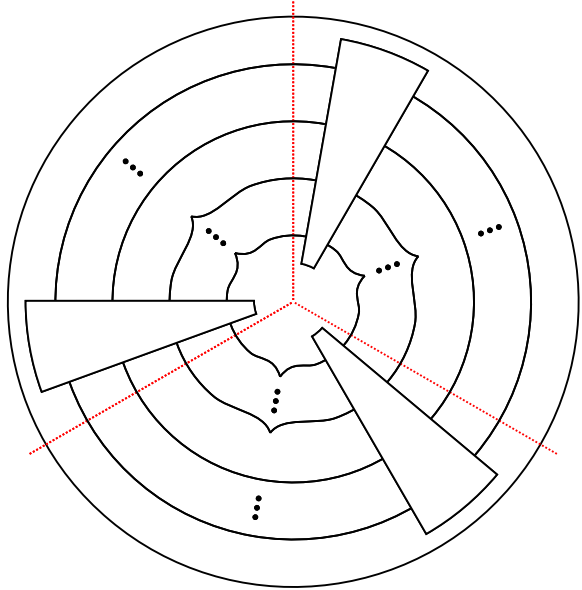}
 \caption{\label{F:Folds4} A slightly more general form for the image
   of the fold locus, which fits $G_2$.} 
\end{figure}
 
 \item Now we have $G_2$ such that the image of the fold locus is as
   in Figure~\ref{F:Folds4}. At this point we could take $X_i =
   G_2^{-1}(\R^2_i)$ and we would have each $X_i$ diffeomorphic to
   $\natural^{k_i} S^1 \times B^3$ for different $k_i$'s. There is one
   last step to arrange that the $k_i$'s are equal: In fact, $k_i$ is
   equal to the number of folds in sector $X_i$ {\em without
     cusps}. We will show how to add a fold without a cusp to any one
   sector while adding a fold with a cusp to each of the other two
   sectors. This allows us to construct a homotopy from $G_2$ to
   $G_3$, such that $G_3$ has the image of its fold locus of the same
   form as $G_2$ (i.e. as in Figure~\ref{F:Folds4}), with the same
   number of folds without cusps in each sector,
   i.e. $k_1=k_2=k_3=k$. 

 \item Finally we will justify the claim that each $X_i =
   G_3^{-1}(\R^2_i)$ is diffeomorphic to $\natural^k S^1 \times B^3$
   with overlap maps as advertised. 
\end{enumerate}

We now fill in the details.

Begin with a handle decomposition of $X$ with one $0$--handle, $i_1$
$1$--handles, $i_2$ $2$--handles, $i_3$ $3$--handles and one
$4$--handle. The union of the $0$-- and $1$--handles, $X_1$ is
diffeomorphic to $I \times (\natural^{i_1} S^1 \times B^2)$. Map this
to $I \times I$ by $(t,p) \mapsto (t,g(p))$ where $g \co
\natural^{i_1} S^1 \times B^2 \to I$ is the standard Morse function
with one index $0$ critical point and $i_1$ index $1$ critical
points. Post-compose this map with a diffeomorphism from $I \times I$
to a half-disk and we have constructed $G_1$ on the union of the $0$--
and $1$--handles so that the image of the fold locus is as in the
right half of Figure~\ref{F:Folds1A}. 
\begin{figure}
\labellist
\tiny\hair 2pt
\pinlabel $0$ [tl] at 37 33
\pinlabel $1$ [tl] at 97 52
\pinlabel $0$ [t] at 176 65
\pinlabel $1$ [t] at 197 50
\endlabellist
\centering
 \includegraphics{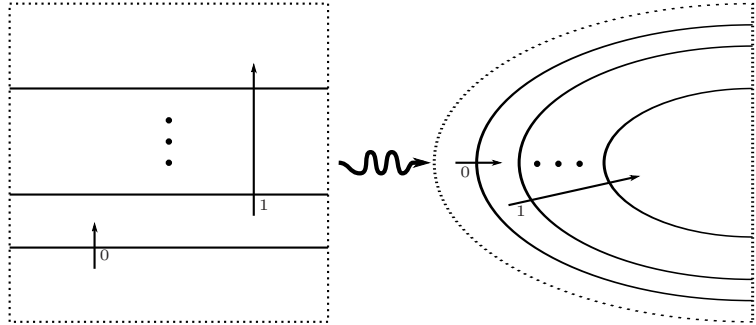}
 \caption{\label{F:Folds1A} The first Morse $2$--function, $G_1$, on
   the $0$-- and $1$--handles of $X$.} 
\end{figure}

Now note that $\partial X_1 = \sharp^{i_1} (S^1 \times S^2)$ sits over
the right edge of the half disk in Figure~\ref{F:Folds1A} and that the
vertical Morse function on $\partial X_1$, i.e. $z \circ G_1
|_{\partial X_1}$ is the standard Morse function with $i_1$ index $1$
critical points and $i_1$ index $2$ critical points, inducing the
standard genus $i_1$ splitting of $\partial X_1$, with Heegaard
surface $F$.  

Consider the framed attaching link $L \subset \partial X_1$ for the
$2$--handles of $X$. Generically $L$ will be disjoint in $\partial
X_1$ from the ascending $1$--manifolds of the index $2$ critical
points of $z \circ G_1 |_{\partial X_1}$ as well as the descending
$1$--manifolds of the index $1$ critical points. Thus $L$ can be
projected onto the Heegaard surface $F$ along gradient flow lines
to give an immersed curve $\overline{L}$ in $F$ with at worst
double points. By adding kinks if necessary, we can assume that the
handle framing of $L$ agrees with the ``blackboard framing'' coming
from $\overline{L} \subset F$. Then by stabilizing this Heegaard
splitting once for each crossing of $\overline{L}$, we can resolve
these crossings and get $L$ to lie in the Heegaard surface with
framing coming from the surface. This process translates into an
extension of the thus-far constructed $G_1$ from $X_1$ to $X_1 \cup
([0,1] \times \partial X_1)$ with fold locus as in
Figure~\ref{F:Folds1B}, with one cusp for each stabilization. In other
words, the sequence of stabilizations translates into a homotopy $g_t$
from $g_0$, the standard Morse function on $\sharp^{i_1} (S^1 \times
S^2)$, to $g_1$, the stabilized Morse function. This homotopy then
becomes a Morse $2$--function on the collar $[0,1] \times \partial
X_1$. 
\begin{figure}
\labellist
\tiny\hair 2pt
\pinlabel $0$ [tr] at 12 65
\pinlabel $1$ [t] at 30 50
\pinlabel $1$ [tl] at 124 37
\pinlabel $2$ [tl] at 124 76
\endlabellist
\centering
 \includegraphics{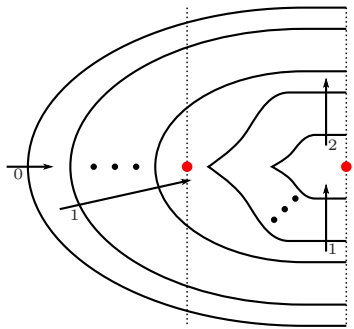}
 \caption{\label{F:Folds1B} $G_1$ extended to a collar on $\partial
   X_1$. In the two vertical slices shown, both diffeomorphic to
   $\sharp^n (S^1 \times S^2)$, the Heegaard surface sits over the
   highlighted red points. The framed attaching link $L$ for the
   $2$--handles of $X$ lies in the Heegaard surface for the right-most
   Morse function, i.e. over the right-most red point, with framing
   coming from the surface.} 
\end{figure}

Now let $F$ refer to the stabilized Heegaard surface, in which
$L$ lies. Attaching a $4$--dimensional $2$--handle to $X_1$ along a
component $K$ of $L$ is the same as attaching $I$ times a
$3$--dimensional $2$--handle to $X_1$ along $I \times K \subset I
\times F \subset \partial X_1$. In Figure~\ref{F:Folds1C} we show
the resulting Morse $2$--function at the left, where the handle sits
over a vertical rectangle. Next we bend this rectangle to make the
image again a half-disk. Finally, noting that the vertical Morse
function at the right edge now has an index $2$ critical value below
an index $1$ critical value, we switch these values to get the Morse
$2$--function at the right side of Figure~\ref{F:Folds1C}.  
\begin{figure}
\labellist
\tiny\hair 2pt
\pinlabel $0$ [tl] at 64 12
\pinlabel $1$ [l] at 58 27
\pinlabel $3$ [bl] at 64 149
\pinlabel $2$ [l] at 59 132
\pinlabel $2$ [tr] at 78 77
\pinlabel $0$ [tl] at 173 12
\pinlabel $1$ [l] at 167 27
\pinlabel $3$ [bl] at 173 149
\pinlabel $2$ [l] at 168 132
\pinlabel $2$ [br] at 177 62
\pinlabel $1$ [tr] at 178 98
\pinlabel $0$ [tr] at 270 12
\pinlabel $1$ [br] at 278 58
\pinlabel $3$ [br] at 270 149
\pinlabel $2$ [tr] at 278 103
\endlabellist
\centering
 \includegraphics{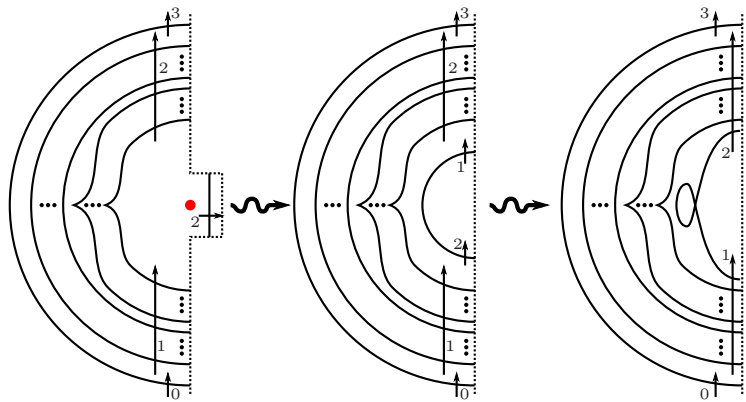}
 \caption{\label{F:Folds1C} $G_1$ after attaching a $4$--dimensional
   $2$--handle.} 
\end{figure}

Note that everything in the preceding paragraph happened in a
neighborhood of $K$, so that the rest of $L$ still lies in the middle
Heegaard surface for the Morse function at the right edge of the final
diagram in Figure~\ref{F:Folds1C}. Thus we can attach each
$4$--dimensional $2$--handle this way to get the Morse $2$--function
at the left side of Figure~\ref{F:Folds1D}. Each $2$--handle of $X$
corresponds to a kink in the image of the folds, i.e. a smoothly
immersed arc with a single transverse double point. Repeating our
construction for $X_1$ with the union of the $3$-- and $4$--handles,
we construct the Morse $2$--function at the right side of
Figure~\ref{F:Folds1D}. The two halves give vertical Morse functions
on the boundary of the union of the $3$-- and $4$--handles, which are
related by some Cerf graphic. Putting this Cerf graphic in between the
two parts of Figure~\ref{F:Folds1D} gives us $G_1$ as in
Figure~\ref{F:Folds1}. 
\begin{figure}
\labellist
\tiny\hair 2pt
\pinlabel $0$ [tr] at 83 11
\pinlabel $1$ [br] at 96 66
\pinlabel $3$ [br] at 82 149
\pinlabel $2$ [tr] at 95 95
\pinlabel $0$ [tl] at 130 12
\pinlabel $1$ [r] at 139 40
\pinlabel $3$ [bl] at 130 148
\pinlabel $2$ [r] at 138 120
\endlabellist
\centering
 \includegraphics{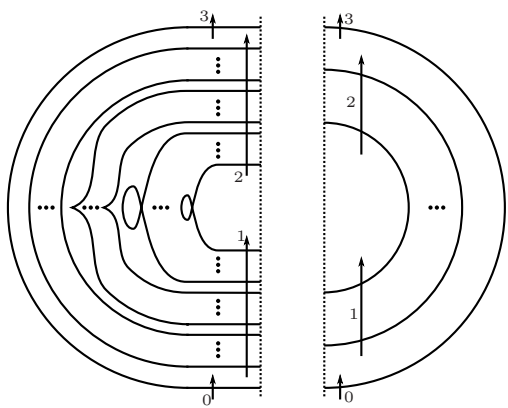}
 \caption{\label{F:Folds1D} Two halves of $G_1$: the $0$--, $1$-- and
   $2$--handles on the left and the $3$-- and $4$--handles on the
   right. Connecting them with a Cerf graphic gives
   Figure~\ref{F:Folds1}.} 
\end{figure}

To get to Figure~\ref{F:Folds2}, first we take the Cerf graphic
section of Figure~\ref{F:Folds1} and pull the births (left-cusps) to
the left of the Cerf graphic and the deaths (right-cusps) to the
right, and then pull all index $1$ critical points below all index $2$
critical points. Then the left-cusps can be pulled further left, past
the kinks which correspond to $4$--dimensional $2$--handles, because
the $4$--dimensional $2$--handle attachments are independent of the
$3$--dimensional stabilizations corresponding to the cusps. This is
shown in Figure~\ref{F:Folds2A}. 
\begin{figure}
\labellist
\tiny\hair 2pt
\pinlabel $1$ at 10 190
\pinlabel $2$ at 190 190
\pinlabel $3$ at 10 90
\pinlabel $4$ at 190 90
\endlabellist
\centering
 \includegraphics{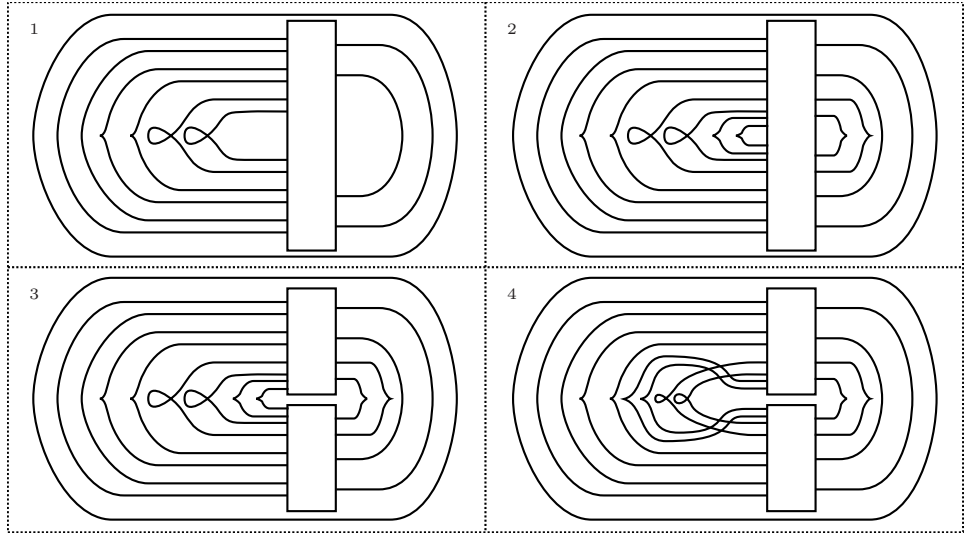}
 \caption{\label{F:Folds2A} Pulling cusps out of the Cerf
   graphic. Here we suppress the ``three dots'' notation as well as
   the indices of the folds, as these are understood from earlier
   figures.} 
\end{figure}

Next we homotope the kinks into pairs of cusps as in
Figure~\ref{F:Folds2B}. The first step of Figure~\ref{F:Folds2B}
introduces a swallowtail at the vertical tangency of the kink; this
move has been discussed extensively elsewhere~\cite{lekili} and is a
standard singularity that occurs in a homotopy between homotopies
between Morse functions. The second step moves an arc of index $1$
critical points in a homotopy (Cerf graphic) below an arc of index $2$
critical points. This is also standard and is possible because the
descending manifold for the index $1$ point remains disjoint from the
ascending manifold for the index $2$ point throughout the
homotopy. (Equivalently, in homotopies between Morse functions we
never expect $1$--handles to slide over $2$--handles.) 
\begin{figure}
\labellist
\tiny\hair 2pt
\pinlabel $2$ [br] at 8 32
\pinlabel $1$ [br] at 100 32
\pinlabel $1$ [br] at 188 32
\pinlabel $1$ [tr] at 66 8
\pinlabel $2$ [tr] at 65 56
\pinlabel $1$ [tr] at 153 8
\pinlabel $2$ [tr] at 152 56
\pinlabel $1$ [tr] at 234 8
\pinlabel $2$ [tr] at 233 56
\pinlabel $1$ [tr] at 121 42
\pinlabel $2$ [tr] at 120 20
\endlabellist
\centering
 \includegraphics{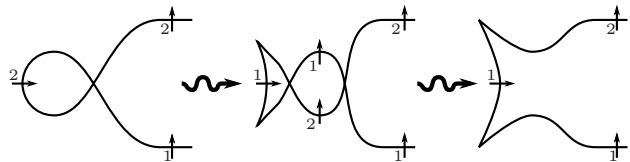}
 \caption{\label{F:Folds2B} Turning kinks into pairs of cusps.}
\end{figure}

Finally, Figure~\ref{F:Folds4A} shows how to add folds and cusps to a
Morse $2$--function as in Figure~\ref{F:Folds4} so as to increase the
number of folds without cusps in one of the three sectors. Here we are introducing an eye, as in Figure~\ref{F:Eye}, modified by a slight isotopy. Note that the transition from the second to the third diagram in the figure is not essential, but only serves to put the resulting diagram in the form of Figure~\ref{F:Folds4}. Depending on how we orient the new eye with
respect to the trisection of $\R^2$, we either add the fold without
cusps to $\R^2_1$, $\R^2_2$, or $\R^2_3$.
\begin{figure}
\labellist
\tiny\hair 2pt
\endlabellist
\centering
 \includegraphics{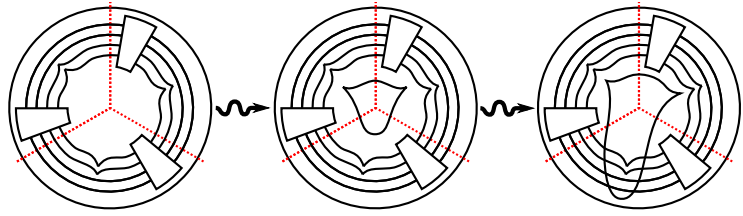}
 \caption{\label{F:Folds4A} Adding an extra fold without cusps in one sector; again we suppress the ``three dots'' notation and the fold indices.} 
\end{figure}

(Note that if we do this
operation three times, once for each sector, we increase $k$ by $1$ and $g$ by $3$; this is precisely a stabilization of the trisection, as will be shown in Section~\ref{S:Uniqueness}.) 

Now we need to show that, having put our Morse $2$--function finally
into the form of Figure~\ref{F:Folds4}, with $k$ folds in each sector
without cusps and $g-k$ folds with cusps, then for each $i$,
$G^{-1}(\R^2_i) = X_i \cong \natural^k (S^1 \times B^3)$. However, we
have already seen this: Each sector, ignoring the Cerf graphic block,
looks just like Figure~\ref{F:Folds1B}, which we already know is
$\natural^k (S^1 \times B^3)$ with a $(g-k)$--times stabilized
standard Heegaard splitting on the boundary. The Cerf graphic block
connecting one sector to another is a product which does not interfere
with the Heegaard splitting.

\end{proof}

\section{Trisections and handle decompositions}

The techniques of the previous section lead to a relationship between trisections and handle decompositions equipped with certain extra data. We will use this relationship both to provide an alternate proof of Theorem~\ref{T:Existence} and to prove Theorem~\ref{T:Uniqueness}.

By a {\em system of compressing disks} for a $3$--dimensional handlebody $H$ of genus $g$, we mean a collection of properly embedded disks $D_1, \ldots, D_g \subset H$ such that cutting $H$ open along $D_1 \cup \ldots \cup D_g$ yields a $3$--ball.

\begin{lemma} \label{L:Trisection2Handles}
 If $X = X_1 \cup X_2 \cup X_3$ is a trisection of a $4$--manifold $X$, then there is a handle decomposition of $X$ as in Theorem~\ref{T:Existence} satisfying the following properties:
 \begin{enumerate}
  \item $X_1$ is the union of the $0$-- and $1$--handles.
  \item Considering the Heegaard splitting $\partial X_1 = H_{12} \cup H_{31}$ with Heegaard surface $F$, the attaching link $L$ for the $2$--handles lies in the interior of $H_{12}$.
  \item The framed attaching link $L=K_1 \cup \ldots \cup K_{g-k}$ is isotopic in $H_{12}$ to a framed link $L'=K'_1 \cup \ldots \cup K'_{g-k} \subset F$, with framings equal to the framings induced by $F$.
  \item There is a system of compressing disks $D_1, \ldots, D_g$ for $H_{12}$ such that the curves $K'_1, \ldots, K'_{g-k}$ are geometrically dual in $F$ to the curves $\partial D_1, \ldots, \partial D_{g-k}$. In other words, each $K'_j$ intersects $\partial D_j$ transversely once and is disjoint from all other $\partial D_i$'s.
  \item  There is a tubular neighborhood $N=[-\epsilon,\epsilon] \times H_{12}$ of $H_{12}$ with $[-\epsilon,0] \times H_{12} = N \cap X_1$, such that $X_2$ is the union of $[0,\epsilon] \times H_{12}$ with the $2$--handles.
 \end{enumerate}
\end{lemma}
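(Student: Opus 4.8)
The plan is to read the required handle decomposition directly off the three pieces of the trisection, assigning the $0$-- and $1$--handles to $X_1$, the $2$--handles to $X_2$, and the $3$-- and $4$--handles to $X_3$. First I would use the diffeomorphism $\phi_1 \co X_1 \to Z_k = \natural^k(S^1\times B^3)$ to equip $X_1$ with the standard handle decomposition consisting of one $0$--handle and $k$ $1$--handles, which gives property (1). By Definition~\ref{D:Trisection} the induced splitting $\partial X_1 = H_{12}\cup_F H_{31}$ is the standard genus $g$ Heegaard splitting $Y_{k,g}^-\cup Y_{k,g}^+$ of $Y_k$, so $F$ is the Heegaard surface and $H_{12}$ carries a system of compressing disks $D_1,\dots,D_g$ whose boundary curves $\partial D_1,\dots,\partial D_g$ form the $\alpha$--cut system of the standard diagram.

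The heart of the argument is to describe $X_2$ as a collar on $H_{12}$ with $2$--handles attached. I would choose a collar $N = [-\epsilon,\epsilon]\times H_{12}$ of $H_{12}$ in $X$ with $[-\epsilon,0]\times H_{12} = N\cap X_1$, so that $[0,\epsilon]\times H_{12}\subset X_2$ is a collar of $H_{12}$ inside $X_2$; note $[0,\epsilon]\times H_{12}\cong H_{12}\times I \cong \natural^g(S^1\times B^3)$, a $4$--dimensional handlebody with $g$ one--handles. The key claim is that $X_2\cong Z_k$, equipped with its standard boundary splitting $H_{12}\cup_F H_{23}$ (coming from $\phi_2$), is obtained from this collar by attaching $g-k$ two--handles along curves $K'_1,\dots,K'_{g-k}\subset \{\epsilon\}\times F$ with surface framings, where $K'_j$ is dual to $\partial D_j$. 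Indeed, in the standard genus $g$ diagram for $Y_k$, after relabeling so that the non--parallel pairs are $(\partial D_j,\delta_j)$ with $\partial D_j\cdot\delta_j=1$ for $j=1,\dots,g-k$ (and the remaining pairs parallel), the compressing curves $\delta_1,\dots,\delta_{g-k}$ of $H_{23}$ are exactly the required duals $K'_j$, disjoint from all $\partial D_i$ with $i\neq j$ precisely because the diagram is standard. Attaching these $2$--handles to $\natural^g(S^1\times B^3)$ cancels $g-k$ of the $1$--handles, producing $\natural^k(S^1\times B^3)=Z_k$; since the attachments occur away from $H_{12}\times\{0\}$, the handlebody $H_{12}$ persists as one side of the new boundary splitting and the complementary side is $H_{23}$. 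Taking $L'=K'_1\cup\dots\cup K'_{g-k}$ and letting $L$ be a pushoff of $L'$ into the interior of $H_{12}$ then gives properties (2), (3), (4) and (5) simultaneously.

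Finally I would observe that what remains is to attach $X_3$ to $X_1\cup X_2$ along all of $\partial X_3 = Y_k = H_{23}\cup_F H_{31}$. Since $X_3\cong Z_k$ is glued along its entire boundary, turning $Z_k$ upside down shows this attachment is exactly $k$ $3$--handles and one $4$--handle, with no further data needed by the uniqueness of such attachments~\cite{LaudenbachPoenaru}. Absorbing the collar $[0,\epsilon]\times H_{12}$ back into $X_1$ (which does not change the diffeomorphism type) yields a handle decomposition of $X$ with one $0$--handle, $k$ $1$--handles, $g-k$ $2$--handles, $k$ $3$--handles and one $4$--handle, as in Theorem~\ref{T:Existence}.

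I expect the main obstacle to be the precise verification of the central claim: that $Z_k$, with its standard genus $g$ boundary Heegaard splitting, is diffeomorphic rel $H_{12}$ to $H_{12}\times I$ with $g-k$ dual $2$--handles, and that the complementary handlebody of the resulting boundary splitting is genuinely $H_{23}$ rather than merely some genus $g$ handlebody yielding $Y_k$. This requires tracking the cancellation of $1$--handles against the dual $2$--handles at the level of the boundary Heegaard diagram, using standardness to guarantee that the surviving side is $H_{12}$ and the new side is $H_{23}$. Everything else reduces to bookkeeping of handle indices together with the collar and upside--down--handlebody observations.
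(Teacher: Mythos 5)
Your strategy is genuinely different from the paper's: the paper proves this lemma with Morse $2$--functions, putting the standard Morse $2$--function on each sector (each being $\natural^k(S^1\times B^3)$ with a genus $g$ splitting of its boundary), inserting Cerf graphics where the sectors meet, and isotoping the base so that projection to the horizontal axis becomes an ordinary Morse function whose sublevel sets realize properties (1) and (5); properties (2)--(4) are then read off from the local model at a cusp, where the attaching circle of the $2$--handle and the boundary of a compressing disk of $H_{12}$ appear as a geometrically dual pair of vanishing cycles in the fiber. Your purely handle-theoretic reduction to a model statement about the pair $(Z_k, Y_{k,g}^+)$ is a legitimate alternative route, and its outer layers are fine: transporting the disk system and the attaching link under a diffeomorphism of pairs, matching collars by uniqueness of tubular neighborhoods, and using \cite{LaudenbachPoenaru} to see the gluing of $X_3$ as $k$ $3$--handles and a $4$--handle all work as you describe.

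The gap is in the central claim, and your proposed justification does not close it. Writing $W=(H_{12}\times[0,\epsilon])\cup(\text{dual $2$--handles})$, it is true (since the $2$--handles are attached along $H_{12}\times\{\epsilon\}$) that $H_{12}\times\{0\}$ survives as one side of the boundary splitting of $W$, and handle cancellation gives an abstract diffeomorphism $W\cong Z_k$. But what you need is a diffeomorphism of \emph{pairs} $(W,H_{12}\times\{0\})\cong(X_2,H_{12})$, and cancellation does not provide one: the cancelled $1$--handles are of the form $h^1_j\times[0,\epsilon]$, and each meets $H_{12}\times\{0\}$ in the $1$--handle $h^1_j\times\{0\}$ of $H_{12}$ itself, so the cancellation diffeomorphism is not supported away from $H_{12}\times\{0\}$ and you cannot conclude that it carries $H_{12}\times\{0\}$ to $Y^+_{k,g}$; knowing only that one side of the splitting of $\partial W$ is a genus $g$ handlebody does not identify the splitting with the standard one required by Definition~\ref{D:Trisection}, which is exactly the statement ``the complementary side is genuinely $H_{23}$'' that you flag. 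The claim is true and can be proved with tools already in the paper, for instance as follows: exhibit a complete system of compressing disks for the complementary side $B$ of $\partial W$ directly --- for each unstabilized index $i$ the disk $(\partial D_i\times[0,\epsilon])\cup(D_i\times\{\epsilon\})$, giving $k$ parallel pairs, and for each $j\leq g-k$ the disk formed by the vertical annulus over $\delta_j$ capped with the core disk of the $j$-th $2$--handle (here the surface framing is what makes this cap exist), giving $g-k$ dual pairs. This shows $B$ is a genus $g$ handlebody (as in Lemma~\ref{L:Handles2Trisection}) \emph{and} that the splitting of $\partial W$ has the standard genus $g$ diagram of $\sharp^k(S^1\times S^2)$, hence there is a diffeomorphism $\partial W\to\partial Z_k$ carrying $H_{12}\times\{0\}$ to $Y^+_{k,g}$; finally extend this over the $4$--manifold using \cite{LaudenbachPoenaru} (or argue via Waldhausen's theorem \cite{Waldhausen} as the paper does in proving Theorem~\ref{T:Uniqueness}) to obtain the diffeomorphism of pairs. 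Without some such argument, ``using standardness to guarantee that the surviving side is $H_{12}$ and the new side is $H_{23}$'' restates the claim rather than proving it.
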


\begin{proof}
 Each sector of the trisection of $X$ is diffeomorphic to $\natural^k
(S^1 \times B^3)$ with a genus $g$ splitting of its boundary.  Thus it
has a standard Morse $2$--function onto a wedge in $R^2$, see Figure~\ref{F:Folds4}.
Two sectors meet at $X_i \cap X_{i+1} = \natural^k (S^1 \times B^2)$,
and the two Morse $2$--functions on the two sectors give two Morse
functions on the intersection $X_i \cap X_{i+1}$.  The two Morse
functions are homotopic and thus give a Cerf diagram which can be
inserted into the little wedges in Figure~\ref{F:Folds4}. In the existence proof from the previous section we avoided cusps in the Cerf graphic boxes, but at this point we do not care; any Cerf graphic will do.

An isotopy of $\R^2$ makes the picture look like Figure~\ref{F:hTrisection}. Now projection to the horizontal axis gives a Morse function in which the vertical tangencies become Morse critical points. $X_1$, to the left of the vertical red line, is clearly the union of the $0$-- and $1$--handles. $X_2$, between the legs of the red letter {\em h} is then a handlebody $H_{12}$, cross $I$, with $g-k$ $2$--handles attached. And $X_3$ is obviously what remains. 
\begin{figure}
\labellist
\tiny\hair 2pt
\pinlabel $X_1$ at 135 108
\pinlabel $X_2$ at 178 90
\pinlabel $X_3$ at 230 103
\endlabellist
\centering
 \includegraphics{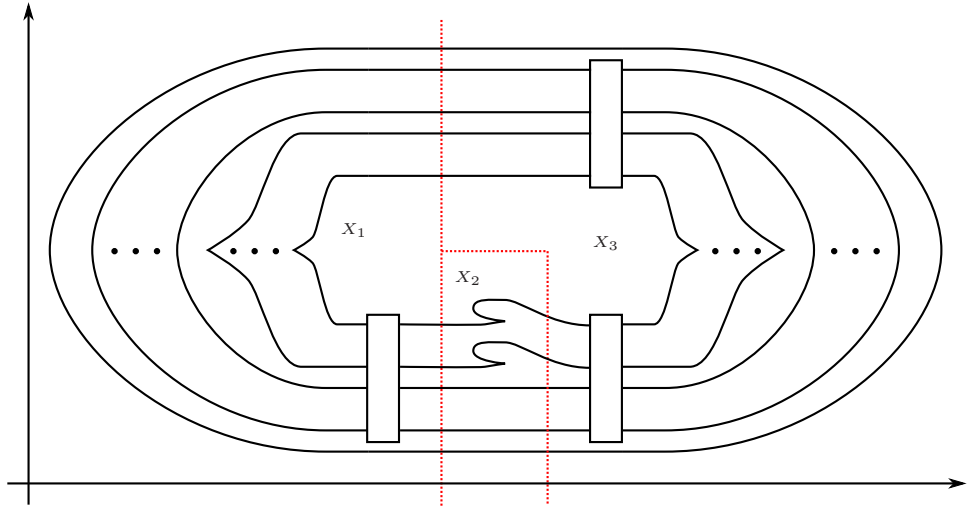}
 \caption{\label{F:hTrisection} Extracting a handle decomposition from a trisection.} 
\end{figure}

We only need to show now that the attaching link for the $2$--handles is as advertised. This can be seen from the fact that the attaching circle for each $2$--handle, between the legs of the {\em h}, is one of a dual pair of curves on the fiber near a cusp. The other curve in the dual pair is the attaching curve for the fold that cuts across $H_{12}$ and gives one of the compressing disks for this handlebody. This is illustrated in Figure~\ref{F:hTrisectionZoom}, which shows a zoomed in region of Figure~\ref{F:hTrisection}. The fiber over a specific point is drawn as a once punctured torus; this is just part of the fiber, but the rest of the fiber does not play a role in this local picture. The attaching circles for the two folds are drawn as green and blue circles on the fiber. This is just the usual picture of the fiber between the two arms of a cusp, with attaching circles being geometrically dual. Here, however, we reinterpret this picture to see the blue circle as the boundary of a compressing disk for the handlebody lying over the vertical dotted red line, and to see the green circle as the attaching circle for the $4$--dimensional $2$--handle coming from the vertical tangency in the fold.
\begin{figure}
\labellist
\tiny\hair 2pt
\endlabellist
\centering
 \includegraphics{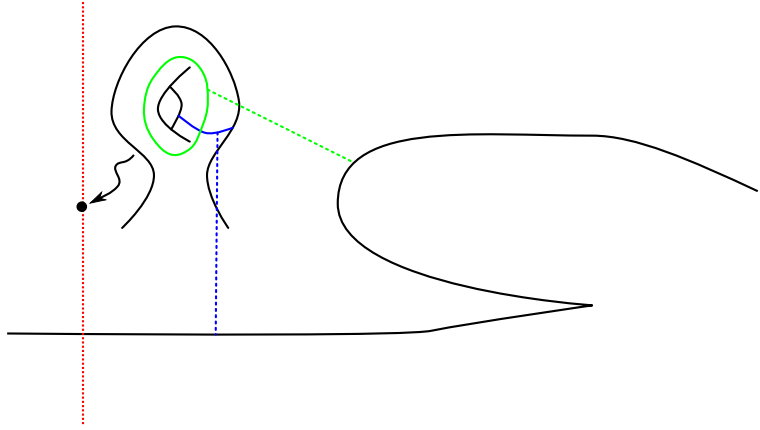}
 \caption{\label{F:hTrisectionZoom} Zooming in on a region of Figure~\ref{F:hTrisection}.} 
\end{figure}

\end{proof}

\begin{lemma} \label{L:Handles2Trisection}
 Consider a handle decomposition of a $4$--manifold $X^4$ with one $0$--handle, $k$ $1$--handles, $g-k$ $2$--handles, $k$ $3$--handles and one $4$--handle. Let $X_1$ be the union of the $0$-- and $1$--handles. Suppose there is a genus $g$ Heegaard splitting $\partial X_1 = H_{12} \cup H_{31}$ of $\partial X_1$ satisfying the following properties in relation to the framed attaching link $L$ for the $2$--handles:
 \begin{enumerate}
  \item $L$ lies in the interior of $H_{12}$.
  \item $L$ is isotopic in $H_{12}$ to a framed link $L' \subset F$, with framing equal to the framing induced by $F$.
  \item There is a system of compressing disks $D_1, \ldots, D_g$ for $H_{12}$ such that the $g-k$ components of $L'$ are, respectively, geometrically dual in $F$ to the curves $\partial D_1, \ldots, \partial D_{g-k}$.
 \end{enumerate}
 Let $N=[-\epsilon,\epsilon] \times H_{12}$ be a small tubular neighborhood of $H_{12}$ with $[-\epsilon,0] \times H_{12} = N \cap X_1$, which the $2$--handles intersect as $[0,\epsilon] \times \nu_L$, where $\nu_L$ is a tubular neighborhood of $L$ in $H_{12}$. Declare $X_2$ to be the union of $[0,\epsilon] \times H_{12}$ with the $2$--handles, and declare $X_3$ to be what remains (the closure of $X \setminus (X_1 \cup X_2)$). Then $X=X_1 \cup X_2 \cup X_3$ is a trisection.
\end{lemma}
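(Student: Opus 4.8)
The plan is to verify that $(X_1,X_2,X_3)$ satisfies the two conditions of Definition~\ref{D:Trisection}; since this lemma is exactly the converse of Lemma~\ref{L:Trisection2Handles}, the task is to run that construction backwards and check that hypotheses (1)--(3) are precisely what is needed to recover a genuine trisection. Concretely I must produce, for each $i$, a diffeomorphism $\phi_i\colon X_i\to Z_k$ carrying $(X_i\cap X_{i+1},\,X_i\cap X_{i-1})$ to $(Y^-_{k,g},\,Y^+_{k,g})$; in particular each $X_i$ must be shown to be $Z_k$ and each pairwise intersection a genus $g$ handlebody. The easy pieces come first. Since $X_1$ is the union of the $0$- and $1$-handles it is immediately $\natural^k(S^1\times B^3)=Z_k$, and by hypothesis $\partial X_1=H_{12}\cup H_{31}$ is a genus $g$ Heegaard splitting. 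Moreover $X_1\cup X_2$ is obtained from $X_1$ by attaching the external collar $[0,\epsilon]\times H_{12}$ over part of $\partial X_1$ (which does not change its diffeomorphism type) and then attaching the $2$-handles along $L'$; hence $X_1\cup X_2$ is diffeomorphic to the union of the $0$-, $1$- and $2$-handles, and $X_3$, the closure of its complement, is the union of the $3$- and $4$-handles. Turned upside down these give one $0$-handle and $k$ $1$-handles, so $X_3\cong Z_k$.

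For $X_2$ itself I would write $X_2=(H_{12}\times I)\cup(2\text{-handles})$. The thickened handlebody $H_{12}\times I$ is $B^4$ with $g$ four-dimensional $1$-handles whose cores correspond to the disks $D_1,\dots,D_g$. By hypothesis (3) each component $K'_j$ of $L'$ ($j\le g-k$) is geometrically dual to $\partial D_j$ and disjoint from the other $\partial D_i$, so $K'_j$ runs once over the $j$-th $1$-handle and misses the belt spheres of the rest. Thus the $g-k$ $2$-handles form cancelling pairs with the first $g-k$ $1$-handles, and since geometric cancellation of a $1/2$-pair is independent of framing, we are left with $B^4$ and $k$ $1$-handles, i.e.\ $X_2\cong Z_k$. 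Note that the surface framing plays no role here; it enters only in the next step.

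The crux is to show that $H_{23}=X_2\cap X_3$ is a genus $g$ handlebody, completing the splittings $\partial X_2=H_{12}\cup H_{23}$ and $\partial X_3=H_{23}\cup H_{31}$. Setting $\mathcal H=(F\times I)\cup_{F\times\{1\}}(H_{12}\times\{1\})\cong H_{12}$, a genus $g$ handlebody with $\partial\mathcal H=F\times\{0\}=F$, the piece $H_{23}$ is obtained from $\mathcal H$ by surgering along $L'\subset F\times\{1\}$ with the surface framing. I would first exhibit $g$ disjoint compressing disks with boundary on $F$: for $i>g-k$ the disk $\hat D_i=(\partial D_i\times I)\cup(D_i\times\{1\})$ survives, since $\partial D_i$ is disjoint from $L'$; and for $j\le g-k$ the surface-framed surgery produces a disk $\hat K_j$ bounded by a pushoff of $K'_j$, where it is exactly the surface framing that guarantees this pushoff bounds. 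The $g$ curves $K'_1,\dots,K'_{g-k},\partial D_{g-k+1},\dots,\partial D_g$ are pairwise disjoint on $F$. To see that they cut $H_{23}$ into a ball I would localise: since $K'_j$ meets $\partial D_j$ once and is disjoint from every other $\partial D_i$, it is isotopic in $\mathcal H$ to the core of the $j$-th handle, and the surgery is supported in disjoint handle neighbourhoods, so one may treat a single handle at a time. The genus-$1$ model---surgering a solid torus along a surface-framed longitude dual to its meridian---returns the dual solid torus, replacing the meridian $\partial D_j$ by $K'_j$; performing all $g-k$ such local moves shows $H_{23}$ is a genus $g$ handlebody with the stated meridian system. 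The underlying combinatorial fact, that exchanging a cut-system curve for a geometrically dual one again yields a cut system, holds because $[K'_j]$ pairs nontrivially with $[\partial D_j]$ while the remaining classes pair trivially with it.

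Finally, each $X_i\cong Z_k$ has $\partial X_i\cong\sharp^k(S^1\times S^2)$ presented as a union of two genus $g$ handlebodies, hence a genus $g$ Heegaard splitting. By the uniqueness of Heegaard splittings of $\sharp^k(S^1\times S^2)$ (Waldhausen's theorem for $S^3$ together with Haken's lemma in the reducible case) each such splitting is isotopic to the standard one; extending the standardising isotopy over $Z_k$ through a collar, and using the symmetry of the standard splitting to fix the $Y^+/Y^-$ labelling, yields the desired $\phi_i$ and verifies Definition~\ref{D:Trisection}. I expect the main obstacle to be the crux step: giving a clean surgery description of $H_{23}$ and checking it is a handlebody, where the precise use of the surface framing---rather than the arbitrary framing that sufficed for the diffeomorphism type of $X_2$---is what is genuinely essential.
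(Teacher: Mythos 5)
Your proof is correct and takes essentially the same approach as the paper's, whose entire argument is that, by the duality hypothesis, $X_2 = ([0,\epsilon]\times H_{12})\cup(\text{$2$--handles})$ is $\natural^g(S^1\times B^3)$ with the $g-k$ $2$--handles attached along cores of the first $g-k$ summands, so that they cancel, ``giving both desired results immediately.'' Your more detailed surgery analysis of $H_{23}$ and the closing appeal to Waldhausen to produce the $\phi_i$ are careful elaborations of what the paper treats as immediate; the one quibble is your claim that the surface framing is ``genuinely essential'' for the crux step, since your own collar argument shows the surgered piece is a handlebody for any framing --- the surface framing only serves to make the new meridian system (a pushoff of each $K'_j$) and hence the standardness of the resulting splitting explicit.
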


\begin{proof}
 Almost everything we need for $X_1 \cup X_2 \cup X_3$ to be a trisection is immediate:
 \begin{enumerate}
  \item $X_1$ and $X_3$ are both diffeomorphic to $\natural^k (S^1 \times B^3)$.
  \item $H_{31} = X_3 \cap X_1$ and $H_{12} = X_1 \cap X_2$ are genus $g$ handlebodies.
  \item $F = X_1 \cap X_2 \cap X_3$ is a genus $g$ surface.
 \end{enumerate}
 It remains to verify that $X_2 \cong \natural^k (S^1 \times B^3)$ and that $H_{23} = X_2 \cap X_3$ is a genus $g$ handlebody.
 
 In fact $X_2$ is built by attaching $g-k$ $2$--handles to $X_{12} \cong \natural^k (S^1 \times B^2)$ along $g-k$ copies of $S^1 \times \{0\} \subset S^1 \times B^2$ in the first $g-k$ $S^1 \times B^3$ summands. Thus the $2$--handles ``cancel'' $g-k$ of the $S^1 \times B^3$'s, giving both desired results immediately.
\end{proof}

Using Lemma~\ref{L:Handles2Trisection}, we now present a proof of the existence of trisections in the spirit of~\cite{OzsSzTriangles}:
\begin{proof}[Proof of Theorem~\ref{T:Existence}, Existence]
  Start with a handle decomposition of $X^4$ with one $0$--handle, $k_1$ $1$--handles, $k_2$ $2$--handles, $k_3$ $3$--handles and one $4$--handle. Add cancelling $1$--$2$ and $2$--$3$ pairs if necessary so as to arrange that $k_1=k_3$. Let $X_1$ be the union of the $0$--handle and the $1$--handles. Note that $\partial X_1$ is a connected sum of $k_1$ copies of $S^1 \times S^2$. Let $L \subset \partial X_1$ be the framed attaching link for the $2$--handles. 

  Consider the genus $k_1$ Heegaard splitting of $\partial X_1$ as $\partial X_1 = H_{12} \cup H_{31}$ with $F=H_{12} \cap H_{31}$. (We will soon be stabilizing this Heegaard splitting, but after each stabilization we will use the same names for the surface and the handlebodies.) The attaching link $L \subset \partial X_1$ can be projected onto the Heegaard surface $F$ with transverse double points (crossings), so that the handle framing is the surface framing. (Add kinks to get the framing right.) Make sure that each component has at least one crossing using Reidemeister 2 moves if necessary. Let $c$ be the number of crossings in this projection. 

  If $c \leq k_2$ then we are almost done. Stabilize the Heegaard splitting exactly $k_2$ times, with $c$ of these stabilizations occuring at the crossings. Then $L$ can be isotoped so as to resolve all the crossings by sending the over strand at each crossing over the new $S^1 \times S^1$ summand in $F$ coming from the stabilization at that crossing. Now we have a genus $g=k_1+k_2$ Heegaard splitting. Letting $k=k_1$ and $g=k_1+k_2$, and pushing $L$ into the interior of $H_{12}$, we now satisfy the hypotheses of Lemma~\ref{L:Handles2Trisection} and apply that lemma to produce our trisection. (We get duality to a system of meridians as follows: Each component $K$ of $L$ goes over at least one stabilization which no other components go over, and therefore is the unique component intersect the meridian for that stabilization. For every other meridian which $K$ intersects, slide that meridian's compressing disk over the compressing disk corresponding to the stabilization singled out in the preceding sentence.)

  If $c > k_2$ then add $c-k_2$ cancelling $1$--$2$ pairs and $c-k_2$ cancelling $2$--$3$ pairs to the original handle decomposition of $X$. Now we have $k'_1 = k_1+c-k_2$ $1$--handles, and the same number of $3$--handles, as well as $k'_2 = 2c-k_2$ $2$--handles. We consider the new $X'_1 = X_1 \natural^{c-k_2} S^1 \times B^3$ with the natural genus $k'_1$ Heegaard splitting $\partial X'_1 = H'_{12} \cup H'_{31}$ with $F' = H'_{12} \cap H'_{31}$. The original attaching link $L$ still projects onto $F'$ in the same way, with the same crossings, since $F'$ is naturally $F \sharp^{c-k_2} S^1 \times S^1$. 

  However, we also have $2(c-k_2)$ new $2$--handles. Half of these, coming from the $1$--$2$ pairs, are attached along the meridians of the $c-k_2$ new $S^1 \times S^1$ summands in $F'$ and thus immediately satisfy the conditions in Lemma~\ref{L:Handles2Trisection}. The other half, coming from the $2$--$3$ pairs, are attached along $0$--framed unknots, which project onto $F'$ as circles bounding disks in $F'$.

  Now stabilize the new Heegaard splitting $2c-k_2$ times: The first $c$ of these stabilizations should happen at the crossings of $L$, allowing us to resolve crossings as before. The other $c-k_2$ of the stabilizations should occur next to the $c-k_2$ $0$--framed unknots. Then each of these unknots is isotoped to go over the new $S^1 \times S^1$ summand coming from the adjacent stabilization. Now the entire attaching link satisfies the hypotheses of Lemma~\ref{L:Handles2Trisection}. The new genus of the stabilized Heegaard splitting of $\partial X'_1$ is $g' = k'_1 + 2c - k_2$. To conclude the theorem by applying Lemma~\ref{L:Handles2Trisection} we need that $k'_2 = g' - k'_1$, and this is precisely what we have arranged. 
\end{proof}

\section{Uniqueness} \label{S:Uniqueness}

We first prove that the stabilization operation of Definition~\ref{D:Stabilization} really does produce a new trisection. This can be done directly, but instead we will do so by showing that, from a Morse $2$--function point of view, this stabilization corresponds to adding three eyes at the center of a trisected Morse $2$--function. After that we can proceed with the proof of uniqueness.

\begin{proof}[Proof of Lemma~\ref{L:Stabilization}]
 We are given a trisection $(X_1,X_2,X_3)$ of $X$, with handlebodies $H_{ij} = X_i \cap X_j$, properly embedded arcs $A_{ij} \subset H_{ij}$, and regular neighborhoods of these arcs $N_{ij} \subset X$.

 As we will see at the beginning of the proof of Theorem~\ref{T:Uniqueness}, it is easy to construct a Morse $2$--function as in Figure~\ref{F:Folds4} which recovers this trisection. We claim that adding three eyes arranged as in Figure~\ref{F:StabilizedMorse2F} modifies each sector $X_i$ exactly as in Definition~\ref{D:Stabilization}, and since the new Morse $2$--function again gives a trisection, then stabilization as defined in Definition~\ref{D:Stabilization} produces a trisection.

We see that the claim is true one eye at a time. Each time we add an eye, first add it away from the center straddling the intersection of two sectors, such as $H_{31}$, as on the left in Figure~\ref{F:TrisectedEye}. We will then pull the lower fold across the central fiber to achieve the right-hand diagram in Figure~\ref{F:TrisectedEye}. Up to isotopy, moving from the left to the right in this figure is the same as not moving the eye, but instead enlarging the lower sector $X_2$ by attaching the inverse image of the green region labelled $N$. This inverse image is in fact a $1$--handle cobordism attached to $X_2$, since this fold is an index $1$ fold going in towards the middle of the eye. Furthermore, the $1$--handle is cancelled by a $2$--handle immediately above it. The $1$--handle and $2$--handle are actually $I$ cross $3$--dimensional $1$-- and $2$--handles, respectively, and thus we see that we have simply removed a neighborhood of an arc in $H_{31}$ from both $X_1$ and $X_3$ and added it to $X_2$. Repeat this for each of the three eyes.

\end{proof}
\begin{figure}
\centering
 \includegraphics[width=1.5in]{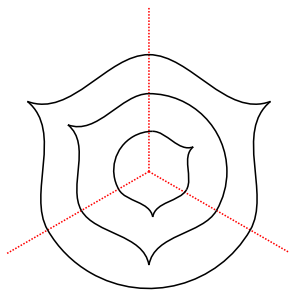}
 \caption{\label{F:StabilizedMorse2F} Stabilizing a Morse $2$--function by adding three ``eyes''.} 
\end{figure}

\begin{figure}
\labellist
\tiny\hair 2pt
\pinlabel $X_2$ at 50 30
\pinlabel $N$ at 158 59
\pinlabel $H_{31}$ [l] at 49 102
\pinlabel $X_2'$ at 268 30
\endlabellist
\centering
 \includegraphics{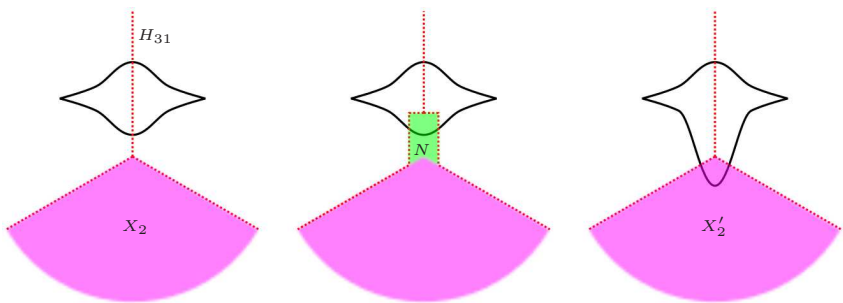}
 \caption{\label{F:TrisectedEye} Adding one eye to a trisected Morse $2$--function.} 
\end{figure}

\begin{proof}[Proof of Uniqueness, Theorem~\ref{T:Uniqueness}]

Consider two trisections of the same $4$--manifold: $X^4 = X_1 \cup X_2 \cup X_3 = X'_1 \cup X'_2 \cup X'_3$. Apply Lemma~\ref{L:Trisection2Handles} to each trisection to get two handle decompositions $D$ and $D'$ of $X$, respectively, with corresponding Heegaard splittings of $\partial X_1$, with attaching links $L$ and $L'$ behaving as in Lemma~\ref{L:Trisection2Handles}. Cerf theory tells us that we can get from $D$ to $D'$ by the following operations:
\begin{enumerate}
 \item Add cancelling $1$--$2$ and $2$--$3$ pairs to both $D$ and $D'$.
 \item Slide $1$--handles over $1$--handles, $2$--handles over $2$--handles and $3$--handles over $3$--handles.
 \item Isotope the handles and their attaching maps without sliding over any handles.
\end{enumerate}

From the description of trisection stabilization in the proof of Lemma~\ref{L:Stabilization} above, we can see that trisection stabilization adds both a $1$--$2$ pair and a $2$--$3$ pair to an associated handle decomposition. Thus, after arranging that we add the same number of $1$--$2$ pairs as $2$--$3$ pairs, we can stabilize the two original trisections to take care of the first operation above.

Clearly sliding $1$--handles over $1$--handles and $3$--handles over $3$--handles, as well as isotoping $1$--handles and $3$--handles without handle slides, does not change the associated trisection.

Thus we are left to investigate the effect of $2$--handle slides and $2$--handle isotopies.

Suppose that we wish to perform a single $2$--handle slide to the handle decomposition $D$.
Associated to the trisection $T$ which gives rise to $D$ we have a Heegaard splitting $H_{12} \cup H_{31}$ for $\partial X_1$, with the attaching link $L$ for the $2$--handles of $D$ lying in $H_{12}$. Isotope $L$ into $\partial H_{12} = F$ so that the components of $L$ are dual to the $g-k$ curves in a system of $g$ meridinal curves (boundaries of compressing disks), as in Lemma~\ref{L:Trisection2Handles}. The handle slide involves a framed arc connecting two components $K_1$ and $K_2$ of $L$. This arc can be projected (following the flow of a Morse function of $\partial X_1$ for the given Heegaard splitting) onto $F$, but with crossings. We can arrange for its framing to agree with the surface framing with kinks, as usual. We want to avoid self-crossings as well as crossings between the arc and $L$ and between the arc and the system of meridinal curves.

Stabilizing the Heegaard splitting, however, allows us to resolve the crossings. In other words, we get a new Heegaard splitting $\partial X_1 = H_{12}' \cup H_{31}'$ obtained from $H_{12} \cup H_{31}$ by Heegaard splitting stabilizations and isotopy such that $L$ and the band lie in $\partial H_{12}' = F'$, still maintaining the property that the components of $L$ are dual to the first $g-k$ meridinal curves in a system of meridinal curves of $H_{12}'$. In addition, the bands are disjoint from these $g-k$ meridinal curves. (Note that we can do this without moving $L$ or the bands, but just by stabilizing and isotoping the Heegaard splitting.) Then sliding one component of $L$ over another along the chosen band maintains this property; we have to change one of the meridinal curves in the system of compressing disks by a handle slide as well.

Again, from the proof of Lemma~\ref{L:Stabilization}, we see that stabilization of the Heegaard splitting of $\partial X_1$ can be achieved by stabilizing the trisection, at the expense of introducing cancelling $1$--$2$ and $2$--$3$ pairs to the associated handle decomposition.

Thus we have shown that, if $D$ and $D'$ are related by handle slides supported in small neighborhoods of arcs in $\partial X_1$, then they are adapted to trisections related by trisection stabilization and isotopy.

Finally, suppose that $D$ and $D'$ are related only by an isotopy of the $2$--handles and their attaching maps, without any handle slides. Then this isotopy extends to an isotopy of $X$ with the result that we can assume that the handle decompositions are identical, and the only difference between the trisections is the Heegaard splitting of $\partial X_1$. 

So we have two Heegaard splittings $\partial X_1 = H_{12} \cup H_{31} = H_{12}' \cup H_{31}'$, respectively, coming from $T$ and $T'$. The fixed attaching link $L$ for the $2$--handles lies in both $H_{12}$ and $H_{12}'$, in both cases satisfying the condition of being dual to meridinal curves.

Note that both $H_{12} \cup H_{31}$ and $H_{12}' \cup H_{31}'$ are genus $g$ Heegaard splittings of $\partial X_1 \cong \natural^k (S^1 \times S^2)$, so that Waldhausen's theorem~\cite{Waldhausen} gives us an isotopy of $\partial X_1$ taking $H_{12}$ to $H_{12}'$. However, this {\em does not} imply that the trisections $T$ and $T'$ are isotopic, because this isotopy will in general move the link $L$. If we can find an isotopy that does not move $L$, then we will be done, but first we will probably need to stabilize.

To see how to do this, construct two Morse functions $f$ and $f'$ on $\partial X_1$ with regular values $a < b$ such that:
\begin{enumerate}
 \item $f$ and $f'$ agree on $f^{-1}(-\infty,a] = f'^{-1}(-\infty,a]$, which is a tubular neighborhood of $L$ (thus each has $g-k$ index $0$ critical points and $g-k$ index $1$ critical points),
 \item $f^{-1}(-\infty,b]=H_{12}$,
 \item $f'^{-1}(-\infty,b]=H_{12}'$,
 \item $f$ and $f'$ have only critical values of index $1$ in $[a,b]$ and critical values of index $2$ and $3$ in $[b,\infty)$.
\end{enumerate}
Now Cerf theory gives us a homotopy $f_t$ from $f_0 = f$ to $f_1 = f'$ which involves $1$--$2$ births and deaths on $f^{-1}(b)$ and otherwise no critical values crossing $b$, and such that $f_t = f = f'$ on $f^{-1}(\infty,a]$. Thus, after stabilizing the Heegaard splittings away from $L$, there is an isotopy fixing $L$ taking the one Heegaard splitting to the other.

Again, the Heegaard splitting stabilizations are achieved by trisection stabilizations.

\end{proof}

\begin{remark}

 Morally it seems that there should be a Morse $2$--function proof of uniqueness that starts with a generic homotopy between two Morse $2$--functions corresponding to two given trisections. Then the proof would homotope this homotopy so as to arrange that the Cerf $2$--graphic in $[0,1] \times \R^2$, a surface of folds with cusps and higher codimension singularities, is in a nice position with respect to the standard trisection of $[0,1] \times \R^2$. This surface of folds is, however, not trivial to work with. A good model might be the method of braid foliations used by Birman and Menasco to prove Markov's theorem in~\cite{BirmanMenasco}.
\end{remark}

\section{The relative case}

When $\p X \neq \emptyset$, we should define a trisection as the kind of subdivision of $X$ which naturally arises from a Morse $2$--function $G:X \to B^2$ where $B^2$ is trisected as in Figure~\ref{F:BasicTrisection}, the locus of critical values behaves well with respect to this trisection of $B^2$, and the trisection of $X$ is just $G^{-1}$ of the three sectors of $B^2$. ``Behaving well'' should mean that the folds all have index $1$ when transversely oriented towards the center of $B^2$, that the only tangencies to rays of $B^2$ are the cusps, that there is at most one cusp per fold in each sector, and that each sector has the same number of cusps. We now formulate this without mention of a Morse $2$--function.

First, when $M^3$ has a boundary $\p M$, then a Heegaard splitting is a splitting into compression bodies rather than solid handlebodies. Traditionally, a compression body is the result of attaching $n \leq k$ $3$--dimensional $2$--handles to $\{1\} \times F_k \subset [0,1] \times F_k$ so as to get a cobordism from $F_k$ to $F_{k-n}$, where $F_k$ is a closed surface of genus $k$. In fact, we can even consider the case where $F$ is a compact surface $F_{k,b}$ of genus $k$ with $b \geq 0$ boundary components, in which case we get a cobordism to $F_{(k-n),b}$. Note that the diffeomorphism type of such a cobordism is completely determined by $k$, $b$ and $n$; let $C_{k,b,n}$ denote a standard model for this compression body. To summarize, both ends of $C_{k,b,n}$ are surfaces with $b$ boundary components, the higher genus end has genus $k$ and there are $n$ compression disks yielding a lower genus end with genus $k-n$.

Now consider $Z_{k,b,n}=[0,1] \times C_{k,b,n}$. Part of $\partial Z_{k,b,n}$ is $Y_{k,b,n} = (\{0\} \times C_{k,b,n}) \cup ([0,1] \times F_{k,b}) \cup (\{1\} \times C_{k,b,n})$, which has a natural genus $k$ Heegaard splitting into two compression bodies $Y_{k,b,n}^+ = ([1/2,1] \times F_{k,b}) \cup (\{1\} \times C_{k,b,n})$ and $Y_{k,b,n}^- = (\{0\} \times C_{k,b,n}) \cup [0,1/2] \times F_{k,b})$. Finally, given any $g \geq k$, let $Y_{k,b,n}=Y_{k,b,n,g}^+ \cup Y_{k,b,n,g}^-$ be the genus $g$ Heegaard splitting obtained from the natural genus $k$ splitting by stabilizing $g-k$ times.

\begin{definition} \label{D:RelTrisection}
 A trisection of a $4$--manifold $X$ with boundary is a splitting $X=X_1 \cup X_2 \cup X_3$ and integers $0 \leq k,b,n,g$ with $n \leq k \leq g$ such that each $X_i$ is diffeomorphic to $Z_{k,b,n}$ via a diffeomorphism $\phi_i : X_i \to Z_{k,b,n}$ for which
 \[ \phi_i(X_i \cap X_{i+1}) = Y_{k,b,n,g}^+\]
 and
 \[ \phi_i(X_i \cap X_{i-1}) = Y_{k,b,n,g}^-\]
\end{definition}

We leave the proof of the following to the reader:
\begin{lemma}
 A trisection of a $4$--manifold $X$ with non-empty boundary restricts to the boundary $M^3 = \partial X$ as either a fibration over $S^1$ (when $b = 0$) or an open book decomposition (when $b \neq 0$). In the first case, $X_i \cap \partial X$ is the inverse image under the fibration of $[2\pi i/3, 2\pi (i+1)/3] \subset S^1$. In the second case, $X_i \cap \partial X$ is the union of this inverse image and the binding.
\end{lemma}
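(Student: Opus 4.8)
The plan is to compute the relevant piece of each sector directly from Definition~\ref{D:RelTrisection} and the standard model $Z_{k,b,n}=[0,1]\times C_{k,b,n}$, and then assemble the three pieces. For each $i$ the diffeomorphism $\phi_i\colon X_i\to Z_{k,b,n}$ identifies the glued (internal) part of $\partial X_i$ with $Y_{k,b,n}=(\{0\}\times C_{k,b,n})\cup([0,1]\times F_{k,b})\cup(\{1\}\times C_{k,b,n})$, since $\phi_i(X_i\cap X_{i+1})=Y_{k,b,n,g}^+$ and $\phi_i(X_i\cap X_{i-1})=Y_{k,b,n,g}^-$ exhaust the faces of $\partial X_i$ that meet neighboring sectors. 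Hence
\[ M_i := X_i\cap\partial X = \phi_i^{-1}\!\left(\,\overline{\partial Z_{k,b,n}\setminus Y_{k,b,n}}\,\right). \]
First I would compute this complement inside the model. Writing $\partial C_{k,b,n}$ as the closed surface built from the two cobordism ends $F_{k,b}$ and $F_{k-n,b}$ together with the $b$ vertical annuli $\partial F_{k,b}\times[0,1]$, one sees that $Y_{k,b,n}$ uses exactly the higher genus end $F_{k,b}$, so its complement in $\partial C_{k,b,n}$ is the union of $F_{k-n,b}$ with the vertical annuli, which is diffeomorphic to the page $P:=F_{k-n,b}$ with boundary the $b$ corner circles $\beta:=\partial F_{k,b}$. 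Therefore $M_i\cong[0,1]\times F_{k-n,b}$.

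Next I would assemble the three products. The identification gluing the $Y^+$ face of $X_i$ to the $Y^-$ face of $X_{i+1}$ carries $\{1\}\times C_{k,b,n}$ to $\{0\}\times C_{k,b,n}$, and hence restricts to a diffeomorphism of pages sending the $\{1\}$--end of $M_i$ to the $\{0\}$--end of $M_{i+1}$. Doing this cyclically for $i\in\{1,2,3\}$ exhibits $\partial X=M_1\cup M_2\cup M_3$ as a bundle with fiber $P=F_{k-n,b}$ over a circle assembled from the three interval factors, giving a projection $p\colon\partial X\to S^1$ for which $X_i\cap\partial X=p^{-1}([2\pi i/3,\,2\pi(i+1)/3])$ by construction, where the three arcs correspond to the trisection of $B^2$. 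When $b=0$ the page $P=F_{k-n}$ is closed, so $p$ is an honest surface bundle over $S^1$ and the lemma follows immediately.

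When $b\neq 0$ the fibers have boundary, and the remaining point — which I expect to be the main obstacle — is to promote the surface-with-boundary bundle $p$ to a genuine open book. The candidate binding is the $b$ circles $\beta=\partial F_{k,b}$, which are simultaneously the boundary of every page and the boundary of the central surface $X_1\cap X_2\cap X_3$, and which therefore lie in $X_i\cap\partial X$ for \emph{all} $i$; this already accounts for the ``$\cup$ binding'' in the statement. The real work is local, near $\beta$: in each sector the collar of $\beta$ in the page contributes a region $[0,1]\times\beta\times[0,1]$ whose frontier faces $[0,1]\times\beta$ (where $M_i$ abuts the glued region $[0,1]\times F_{k,b}\subset Y_{k,b,n}$) are identified across sectors by the gluing maps. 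I would verify, using the corner structure of the model $C_{k,b,n}$, that these identifications assemble a neighborhood of $\beta$ into $b$ solid tori $\beta\times D^2$ on which $p$ restricts to the angular coordinate of the $D^2$--factor, so that $\beta$ is embedded as a link and the pages $F_{k-n,b}$ limit onto it correctly. Establishing this local normal form is the delicate step; conceptually it is forced by the fact that the relative trisection was designed to arise from a Morse $2$--function $G\colon X\to B^2$, whose restriction $G|_{\partial X}$ fibers $\partial X$ over $\partial B^2=S^1$ with the boundary of the fibers tracing out exactly this binding.
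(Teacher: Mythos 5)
First, a point of comparison: the paper gives no proof of this lemma at all (it is prefaced by ``We leave the proof of the following to the reader''), so your proposal must stand entirely on its own. Your computation of the pieces is correct and is surely the intended argument: $\overline{\partial Z_{k,b,n}\setminus Y_{k,b,n}}=[0,1]\times\bigl(F_{k-n,b}\cup(\partial F_{k,b}\times[0,1])\bigr)\cong[0,1]\times F_{k-n,b}$, and when $b=0$ the three products glue cyclically along fiber surfaces to give a surface bundle over $S^1$ with $X_i\cap\partial X$ the preimage of the $i$-th arc; that case is essentially complete. One imprecision should be repaired: the transition map $\phi_{i+1}\circ\phi_i^{-1}\colon Y^+_{k,b,n,g}\to Y^-_{k,b,n,g}$ is an arbitrary diffeomorphism and need not carry $\{1\}\times C_{k,b,n}$ to $\{0\}\times C_{k,b,n}$, since those decompositions are not part of the data it preserves. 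What saves the argument is that it does carry page to page, because the page is intrinsically $X_i\cap X_{i+1}\cap\partial X$, which $\phi_i$ and $\phi_{i+1}$ identify with $Y^{\pm}\cap\overline{\partial Z\setminus Y}$. Note also that when $b\neq 0$ this intersection is not merely the end $\{1\}\times P$ but that end together with the collar $[1/2,1]\times\partial P$, so the $M_i$ overlap along ``bent'' pages rather than meeting along product faces; this does not affect your $b=0$ argument but is exactly where the open book structure lives.

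For $b\neq 0$ there is a genuine gap, which you name yourself: the local normal form $\beta\times D^2$ near the binding is precisely the content of the claim that one has an open book rather than merely a fibration of $\partial X\setminus\beta$ by surfaces with boundary, and you never establish it. Your closing appeal to the Morse $2$--function picture cannot fill it: Definition~\ref{D:RelTrisection} makes no reference to any Morse $2$--function, so nothing may be deduced from how trisections ``were designed to arise.'' The gap is fillable by the corner analysis you propose, and it is worth recording how: near a corner circle, each $M_i$ is $\partial F_{k,b}\times Q$ with $Q$ a square whose coordinates are $s\in[1/2-\epsilon,1/2+\epsilon]$ and the collar parameter $r\in[0,\epsilon)$ of $\partial P$ in $P$; the binding is $\partial F_{k,b}\times\{(1/2,0)\}$, and the two halves $\{s\leq 1/2,\,r=0\}$, $\{s\geq 1/2,\,r=0\}$ of the edge $r=0$ lie in $X_i\cap X_{i-1}$ and $X_i\cap X_{i+1}$ respectively. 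Since the transition maps preserve the binding (it is the intrinsically defined set $X_1\cap X_2\cap X_3\cap\partial X$, the boundary of the central surface) and preserve pages, the three square bundles assemble into an orientable, hence trivial, disk bundle $\beta\times D^2$ meeting the sectors in $\beta\times(\hbox{trisected }D^2)$; one then chooses the fibration on each $M_i$ to agree with the angular coordinate there and with the given pages at the two ends. Finally, your argument tacitly uses that the stabilization from genus $k$ to genus $g$ takes place in the interior of $Y_{k,b,n}$ and so changes neither the pages nor the binding; this should be said explicitly.
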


\begin{remark}
Lefschetz fibrations over $B^2$ can be perturbed to give examples of trisections in this relative setting.  Assume that $f:X^4 \to B^2$ is a bundle with fiber $F_{k,b}$ except for exceptional fibers which have nodes where in local coordinates $(z,w)$ $f$ is given by $f(z,w)= zw$. Lekili showed in \cite{lekili} that the map $f$ could be locally perturbed so that the node is replaced by three $1$--folds in the shape of a hyperbolic triangle, as in Figure~\ref{F:NodeToTriangle}. We need such a triangle to go around the central fiber of our trisection, so we move a cusp up to and past the central fiber.  This ups the genus of the central fiber by one.  Now it is easy to trisect $X$ for the only folds are these triangles.
\begin{figure}
\labellist
\tiny\hair 2pt
\pinlabel {node} [t] at 3 21
\pinlabel $1$ [br] at 96 29
\pinlabel $1$ [bl] at 121 29
\pinlabel $1$ [t] at 109 4
\endlabellist
\centering
 \includegraphics{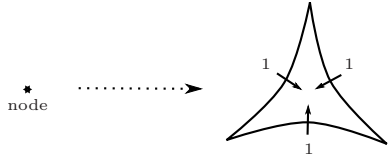}
 \caption{\label{F:NodeToTriangle} Perturbation of a Lefschetz node singularity.}
\end{figure}
\end{remark}

\begin{remark}
 Given two $4$--manifolds $X$ and $X'$, with diffeomorphic boundary, both trisected with $b=0$, and with a gluing map $\partial X \to -\partial X'$ respecting trisections, gluing along the boundary does not imply produce a trisection of the closed manifold $X \cup X'$. However, we naturally have six pieces which fit together like the faces of a cube. From this, the technique described in Section~\ref{S:Examples} for producing a trisection of a bundle over $S^2$ can be generalized to give a natural trisection of $X \cup X'$.
\end{remark}

\begin{theorem}
Given a $4$--manifold $X$ with an open book decomposition or fibration over $S^1$ on $\partial X$, there exists a trisection of $X$ restricting to $\partial X$ as the given fibration or open book.
\end{theorem}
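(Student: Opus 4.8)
The plan is to produce a Morse $2$--function $G$ on $X$, valued in a trisected disk $B^2 = B^2_1 \cup B^2_2 \cup B^2_3$ as in Figure~\ref{F:BasicTrisection}, whose fold locus is in the ``behaving well'' position described at the start of this section (all folds of index $1$ when co-oriented toward the center, tangencies to the radial rays only at cusps, at most one cusp per fold in each sector, and the same number of cusps in each sector), and whose restriction to $\partial X$ realizes the given open book or fibration as a map onto $\partial B^2 = S^1$. Granting such a $G$, the trisection is $X_i = G^{-1}(B^2_i)$: the local fold and cusp models force each $X_i$ to be diffeomorphic to the model $Z_{k,b,n}$ of Definition~\ref{D:RelTrisection}, with pairwise intersections the $Y^{\pm}_{k,b,n,g}$, exactly as in the closed existence proof, and by construction the induced decomposition of $\partial X$ is the prescribed structure (compare the lemma preceding this theorem).

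First I would construct $G$ on a collar $\partial X \times [0,1]$, mapping it onto an annular neighborhood $A$ of $\partial B^2$. The open book gives a fibration of $\partial X$ minus its binding $B$ over $S^1$ with page $F_{k,b}$; setting $G$ on $\partial X \times \{0\}$ equal to this fibration composed with the inclusion $S^1 = \partial B^2 \hookrightarrow B^2$ guarantees the correct boundary restriction. Moving radially inward across $A$, I would use a Morse function on the page together with the relative compression-body structure $C_{k,b,n}$ to fill in the fold locus so that, over each of the three radial pieces of $A$, the folds are in the standard form carried by the model $Z_{k,b,n}$. Near the binding the fibration degenerates, and here I would glue in the standard neighborhood model of an open book ($B \times D^2$ with pages the radial half-disks) to check that $G$ extends smoothly across a neighborhood of $B$; this is the single point at which the open book ($b \neq 0$) and honest fibration ($b=0$) cases diverge.

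Next I would extend $G$ from the collar over the rest of $X$. Choosing a handle decomposition of $X$ built on the collar and realizing its handles as modifications of the fold locus, exactly as in the proof of Theorem~\ref{T:Existence}, I project the attaching link of the $2$--handles onto the page Heegaard surface, resolve its crossings by stabilizing (introducing cusps), and insert the resulting Cerf graphics between the sectors. All of this is supported in the interior of $X$ and so does not disturb the boundary restriction fixed above. Finally, to equalize the numbers of cuspless folds, hence to make all three pieces the same model $Z_{k,b,n}$, I would introduce eyes at the center of $B^2$ three at a time, as in the proof of Lemma~\ref{L:Stabilization}; each such move is localized in an interior ball and again leaves the boundary untouched.

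The main obstacle is the boundary collar step: arranging that $G$ simultaneously induces the prescribed open book on $\partial X$, carries a fold locus already in trisection-compatible form over the annulus $A$, and extends smoothly and in standard form across the binding. Once this boundary data is locked in place, the interior extension and the eye-balancing are straightforward rel-$\partial X$ adaptations of the closed-case arguments; the real work is in reconciling the rigid, prescribed boundary behavior with the folded structure a trisection demands, and in verifying that the relative handle decomposition can be chosen so that the resulting counts of folds, cusps, and compressing disks assemble into a single consistent tuple $(k,b,n,g)$ across all three sectors.
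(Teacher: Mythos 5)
Your overall strategy---build a boundary-adapted Morse $2$--function on a collar and then rerun the closed-case argument rel $\partial X$---is genuinely different from the paper's proof. The paper instead uses the given open book or fibration to view $X$ as a cobordism from $F \times [0,1]$ to $F \times [0,1]$, where $F$ is the page or fiber, chooses a handle decomposition compatible with that cobordism structure, and repeats the second (handle-theoretic) proof of Theorem~\ref{T:Existence}, i.e.\ the one built on Lemma~\ref{L:Handles2Trisection}. That route never has to engineer a fold locus near $\partial X$, which is exactly where your proposal breaks down.

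The genuine gap is your collar step in the open book case ($b \neq 0$). You require $G(\partial X) \subset \partial B^2$ with $G$ restricting on $\partial X$ to the open book projection, and you propose to ``extend smoothly across a neighborhood of $B$'' using the local model $B \times D^2$. No such extension exists: the open book map $\partial X \setminus B \to S^1$ admits no continuous extension over the binding, since points arbitrarily close to a binding point realize every angle in $S^1$. More structurally, by the lemma preceding this theorem, in the open book case the binding lies in $X_i \cap \partial X$ for \emph{all three} $i$; if $X_i = G^{-1}(B^2_i)$, then $G(B)$ must lie in $B^2_1 \cap B^2_2 \cap B^2_3$, which is the center of the disk---it cannot lie on $\partial B^2$ at all. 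The correct boundary-adapted picture (the one implicit in the paper's remark on perturbing Lefschetz fibrations) has the binding sitting over the \emph{center} of $B^2$, each page mapping over a radial ray, and a neighborhood of the binding in $\partial X$ spreading over the whole disk; correspondingly, the central surface of a relative trisection is a surface with boundary, and that boundary is the binding. Accommodating this is not a local patch near $B$: it changes the entire collar construction, and the interior extension and cusp-balancing would then have to be redone relative to this radial structure rather than relative to an annulus. Your $b=0$ case (honest fibration over $S^1$) does not suffer from this problem, since there the boundary fibration genuinely is a map to $\partial B^2$ and the annular collar picture is the right one; but as written the proposal does not prove the open book case, which is the main content of the theorem.
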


\begin{proof}
 Use the given boundary data to see $X$ as a cobordism from $F \times [0,1]$ to $F \times [0,1]$, where $F$ is either the fiber or the page. Using a handle decomposition of $X$ compatible with this cobordism structure, repeat the second version of the proof of Theorem~\ref{T:Existence}.
\end{proof}

Stabilization of trisections makes sense in the relative case, since it takes place inside a ball in the interior of $X$. 

\begin{theorem}
Any two trisections of a fixed $4$--manifold $X$ which agree on $\partial X$ are isotopic after stabilizations.
\end{theorem}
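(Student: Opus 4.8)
The plan is to run the proof of Theorem~\ref{T:Uniqueness} relative to the boundary, reinterpreting every handle decomposition as a handle decomposition of $X$ as a cobordism from $F \times [0,1]$ to $F \times [0,1]$ (as in the existence proof in the relative case), with all moves supported in the interior. First I would establish the relative analogue of Lemma~\ref{L:Trisection2Handles}: the extraction of a handle decomposition from the ``h--trisection'' picture works identically in the interior, the only change being that the fiber is now the surface--with--boundary $F_{k,b}$ and the pieces $H_{ij}$ are compression bodies $C_{k,b,n}$ rather than solid handlebodies, while the $2$--handle attaching link $L$ still lies in $H_{12}$ dual to a system of compressing disks. Since the two given trisections agree on $\partial X$, they induce the same fibration or open book there, and so the two resulting handle decompositions $D$ and $D'$ can be chosen to be identical on a collar of $\partial X$, with $X_1$ and $X_1'$ the unions of the $0$-- and $1$--handles.

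Next I would invoke Cerf theory rel boundary: $D$ and $D'$ are connected by a sequence of births and deaths of canceling $1$--$2$ and $2$--$3$ pairs, handle slides among handles of equal index, and isotopies, all supported in the interior of $X$ and fixing the collar of $\partial X$.

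I would then translate each of these moves into trisection operations via exactly the dictionary used in the closed case. The crucial point, already emphasized in the remark preceding this theorem, is that trisection stabilization is supported in a ball in the interior of $X$ and therefore never disturbs the agreement on $\partial X$; so birth/death of a canceling pair corresponds to stabilization precisely as in the proof of Lemma~\ref{L:Stabilization}, slides and isotopies of $1$-- and $3$--handles leave the trisection unchanged, and slides and isotopies of $2$--handles are handled as before, stabilizing the compression--body splitting of $\partial X_1$ away from $\partial X$ in order to resolve crossings of the sliding arcs against $L$ and against the system of meridians.

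The hard part will be the final step, in which, after the two handle decompositions have been made identical, the two trisections differ only in the compression--body splitting $\partial X_1 = H_{12} \cup H_{31} = H_{12}' \cup H_{31}'$. In the closed case this was resolved using Waldhausen's uniqueness of Heegaard splittings of $\sharp^k (S^1 \times S^2)$, refined by a Cerf homotopy between two Morse functions on $\partial X_1$ chosen to agree on a neighborhood of $L$ so that the isotopy fixes $L$. In the relative case $\partial X_1$ is no longer a connected sum of copies of $S^1 \times S^2$ but carries the prescribed fibration or open book along part of its boundary, so what is needed is the corresponding uniqueness--up--to--stabilization of splittings into compression bodies, rel the boundary data, i.e. a relative Reidemeister--Singer / Waldhausen statement. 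Granting this, the Morse--theoretic refinement fixing $L$ goes through verbatim, and the required splitting stabilizations are again realized by trisection stabilizations in the interior, yielding the desired isotopy after stabilizations.
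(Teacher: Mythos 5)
Your proposal is essentially the paper's own proof: the paper disposes of this theorem in one line, saying that the proof of Theorem~\ref{T:Uniqueness} works verbatim once the appropriate cobordism structure on $X$ is fixed, the key point being that Cerf theory works rel compact subsets --- which is exactly the relativization you spell out (moves supported in the interior, stabilizations in interior balls, handle decompositions fixed on a collar of $\partial X$). The one caveat is that the ``relative Reidemeister--Singer/Waldhausen'' statement you say must be granted is not actually an external input: in the closed proof, Waldhausen's theorem was invoked only for a first attempt that is immediately discarded (its isotopy moves $L$), and the real work is done by the Cerf homotopy between Morse functions agreeing near $L$; run rel the neighborhood of $L$ together with the collar of $\partial X$, that same Cerf argument directly produces the uniqueness-up-to-stabilization of the compression-body splittings, so your final step closes without any additional hypothesis.
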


\begin{proof}
 Again, the proof of Theorem~\ref{T:Uniqueness} works verbatim in this case, once we fix the appropriate cobordism structure on $X$. The key idea is that Cerf theory works perfectly well when we fix behavior on compact subsets.
\end{proof}

\end{document}